\theoremstyle{plain}
\newtheorem{thm}{Theorem}[section]
 \newtheorem{cor}[thm]{Corollary}
 \newtheorem{lem}[thm]{Lemma}
 \newtheorem{prop}[thm]{Proposition}
\theoremstyle{definition}
 \newtheorem{defn}[thm]{Definition}
 \theoremstyle{remark}
 \newtheorem*{ex}{Example}
 \newcommand{\paper}[6]{%
	 #1, {\it #2}, #3 {\bf #4} (#5), #6.			
}
\numberwithin{equation}{section}
\numberwithin{figure}{section}
\newcommand{\Ker}{\operatorname{Ker}}
\newcommand{\im}{\operatorname{Im}}
\newcommand{\rank}{\operatorname{rank}}
\newcommand{\hess}{\operatorname{Hess}}
\begin{document}
\title{Geometry on ruled surfaces with finite multiplicity}
\author{Hiroyuki Hayashi}
\maketitle

\begin{abstract}
We consider ruled surfaces with finite multiplicity. We study behaviors of the striction curves and the singularities of the ruled surfaces. We also give geometric meanings of invariants related to the ruled surfaces.
\end{abstract}

\section{Introduction}\label{sec1}
The study of ruled surfaces in $\mathbb{R}^3$ is a classical subject in differential geometry. Ruled surfaces have singularities in general. In this paper, we study pseudo-cylindrical ruled surfaces. 
In \cite{H2024}, the notion of the pseudo-cylindrical ruled surface which is not cylinder but have properties that is similar to cylinder is given.
In that paper, pseudo-cylindrical developable surfaces obtained from a space curve with a singular point are investigated.

It is known that developable surfaces are classified into cylinders, cones, and tangent developable surfaces and their gluing. Non-cylindrical developable surfaces are one of tangential developable surfaces. Cuspidal edges, swallowtails, and cuspidal cross caps appear on non-cylindrical developable surfaces as singular points. On the other hand, cuspidal beaks and Scherbak surfaces are singularities that do not appear in the non-cylindrical developable surfaces but appear in the tangent developable surfaces which are not non-cylindrical developable surfaces. 

In this paper, we study pseudo-cylindrical developable surfaces because tangent developable surfaces are included in pseudo-cylindrical developable surfaces.
To study pseudo-cylindrical developable surfaces, we consider behaviors of the striction curves on pseudo-cylindrical developable surfaces, and the conditions of the above singularities.


\section{Preliminaries}\label{sec2}
\subsection{Singularities of surface with a frontal and a wave front}
A map $f:(\mathbb{R}^2,0)\to\mathbb{R}^3$ is called a {\it frontal} if there exists a map $\nu:(\mathbb{R}^2,0)\to S^2$ such that for any $p\in((\mathbb{R}^2 ; (u,v)),0)$, it holds that $\langle\nu(p),f_u(p)\rangle=\langle\nu(p),f_v(p)\rangle=0$, where $S^2$ is the unit sphere in $\mathbb{R}^3$ and $\langle,\rangle$ is an inner product. We call $\nu$ a {\it unit normal vector field} of $f$.  We set a map $L=(f,\nu):(\mathbb{R}^2,0)\to \mathbb{R}^3\times S^2$. A frontal $f$ is a {\it wave front} if $L$ is an immersion. We say that $C^{\infty}$-map germs $f_i:(\mathbb{R}^2,x_i)\to(\mathbb{R}^3,y_i)(i=1,2)$ are $\mathcal{A}$-equivalent if there exist diffeomorphism germs $\phi:(\mathbb{R}^2,x_1)\to(\mathbb{R}^2,x_2)$ and $\psi:(\mathbb{R}^3,y_1)\to(\mathbb{R}^3,y_2)$ such that $\psi \circ f_1=f_2 \circ \phi$. A singular point $p_0$ of a map $f$ is called
\begin{itemize}
\item
a {\it cuspidal edge} if $f$ at $p_0$ is $\mathcal{A}$-equivalent to $(u,v)\mapsto(u,v^2,v^3)$ at $0$,
\item
a {\it swallowtail} if $f$ at $p_0$ is $\mathcal{A}$-equivalent to $(u,v)\mapsto(u, u  v^2 + 3 v^4, 2 u  v + 4 v^3)$ at $0$,
\item
a {\it cuspidal butterfly} if $f$ at $p_0$ is $\mathcal{A}$-equivalent to $(u,v)\mapsto(u,5v^4+2uv,4v^5+uv^2-u^2)$ at $0$,
\item
a {\it cuspidal cross cap} if $f$ at $p_0$ is $\mathcal{A}$-equivalent to $(u,v)\mapsto(u,v^2,uv^3)$ at $0$,
\item
a {\it cuspidal lips} if $f$ at $p_0$ is $\mathcal{A}$-equivalent to $(u,v)\mapsto(u,-2v^3-u^2v,3v^4-u^2v^2)$ at $0$,
\item
a {\it cuspidal beaks} if $f$ at $p_0$ is $\mathcal{A}$-equivalent to $(u,v)\mapsto(u,-2v^3+u^2v,3v^4-u^2v^2)$ at $0$,
\end{itemize}
Maps with cuspidal edge, swallowtail, cuspidal butterfly, cuspidal lips, or cuspidal beaks are a wave front. A map with cuspidal cross cap is a frontal but is not a wave front. Let $f:(\mathbb{R}^2,0)\to(\mathbb{R}^3,0)$ be a frontal with a unit normal vector field  $\nu$. For a coordinate system $(u,v)$ on $(\mathbb{R}^2,0)$, we define a function $\lambda:(\mathbb{R}^2,0)\to\mathbb{R}$ by $\lambda=\det(f_u,f_v,\nu)$ and call it the {\it signed area density of $f$}. A singular point $p_0\in (\mathbb{R}^2,0)$ is called a {\it non-degenerate singular point} if it holds that $d\lambda(p_0)\ne0$. In this case, there exists a smooth parameterization $\gamma(t):(\mathbb{R},0)\to (\mathbb{R}^2,0)$ of $S(f)$. Moreover, there exists a vector field $\eta$ on $(\mathbb{R}^2,0)$ such that $\langle\eta(p_0)\rangle_{\mathbb{R}}=\Ker df_{p_0}$ for any $p_0\in S(f)$, where $S(f)$ is the set of the singular points of $f$. We call $\eta$ a {\it null vector field}. Now we define a function $\phi_f(t)$ on $\gamma$ by
\[
\phi_f(t)=\det\left(d(f\circ\gamma)/dt(t),(\nu\circ\gamma)(t),d\nu(\eta(t))\right).
\]
The following lemma is well-known\cite{FSUY2008, SUY2009, IS2010}.
\begin{lem}\label{cond-sing}
Let $f:(\mathbb{R}^2,0)\to(\mathbb{R}^3,0)$ be a wave front and $p_0\in(\mathbb{R}^2,0)$ be a non-degenerate point of $f$. Then the singular point $p_0$ is 
\begin{itemize}
\item
a cuspidal edge if and only if $\eta\lambda(p_0)\ne0$,
\item
a swallowtail if and only if $\eta\lambda(p_0)=0$ and $\eta\eta\lambda(p_0)\ne0$,
\item
a cuspidal butterfly if and only if $\eta\lambda(p_0)=\eta\eta\lambda(p_0)=0$ and $\eta\eta\eta\lambda(p_0)\ne0$.
\end{itemize}
Let $f:(\mathbb{R}^2,0)\to(\mathbb{R}^3,0)$ be a frontal and $p_0\in(\mathbb{R}^2,0)$ be a non-degenerate point of $f$. The singular point $p_0$ is a cuspidal cross cap if and only if $\eta\lambda(p_0)\ne0$, $\phi_f(p_0)=0$, and  $\phi_f'(p_0)\ne0$.
Let $f:(\mathbb{R}^2,0)\to(\mathbb{R}^3,0)$ be a wave front and $p_0\in(\mathbb{R}^2,0)$ be a degenerate point of $f$. The singular point $p_0$ is 
\begin{itemize}
\item
a cuspidal lips if and only if $\rank(df)_{p_0}=1$ and $\det\hess\lambda(p_0)>0$,
\item
a cuspidal beaks if and only if $\rank(df)_{p_0}=1$, $\det\hess\lambda(p_0)<0$, and $\eta\eta\lambda(p_0)\ne0$.
\end{itemize}
\end{lem}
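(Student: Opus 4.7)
The plan is to treat each criterion separately, reducing each case to a normal-form computation. The unifying strategy is to choose adapted coordinates $(u,v)$ on the source so that the singular set (or its zero-locus structure) is standardized, and then check the $\mathcal{A}$-equivalence to the given model by matching Taylor coefficients up to the order dictated by the codimension of the singularity. Since the statement is attributed to \cite{FSUY2008, SUY2009, IS2010}, I would also record which criterion is proved in which reference, and largely follow the Kokubu--Rossman--Saji--Umehara--Yamada / Izumiya--Saji style of normal-form arguments.

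For the three wave-front criteria at a non-degenerate singular point (cuspidal edge, swallowtail, cuspidal butterfly), I would first use $d\lambda(p_0)\neq 0$ to smoothly parameterize $S(f)$ by $\gamma$, and pick coordinates so that $S(f)=\{v=0\}$ and $\eta=\partial_v$ near $p_0$. Then $\lambda(u,v)=v\,\tilde\lambda(u,v)$ for a smooth $\tilde\lambda$, so $\eta\lambda|_{v=0}=\tilde\lambda(u,0)$. The cuspidal edge criterion $\eta\lambda(p_0)\neq 0$ says $\eta$ is transverse to $S(f)$, which, combined with the wave-front condition that $L=(f,\nu)$ is an immersion, yields the SUY/Martinet normal form $(u,v^2,v^3)$. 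For swallowtails and cuspidal butterflies I would track the order of vanishing of $\eta\lambda$ along $\gamma$: first-order vanishing with a nondegenerate second derivative along $\eta$ gives the swallowtail model, and second-order vanishing with nondegenerate third derivative gives the butterfly, by the standard contact-equivalence argument with the suspension unfolding.

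For the cuspidal cross cap criterion the point is that $f$ is only a frontal, so $L$ fails to be immersive and $\lambda$ alone cannot distinguish the singularity type. The function $\phi_f$ is constructed to detect whether $d\nu(\eta)$ is parallel to the image curve $(f\circ\gamma)'$; its simple zero on top of the first-kind condition $\eta\lambda\neq 0$ picks out the normal form $(u,v^2,uv^3)$. The step that requires care here is verifying that $\phi_f$ is $\mathcal{A}$-invariant up to multiplication by a non-vanishing function, so that the vanishing order has intrinsic meaning; this is the Fukui--Hasegawa / IS2010 criterion.

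For the degenerate criteria, the hypothesis $\operatorname{rank}(df)_{p_0}=1$ with $d\lambda(p_0)=0$ means $p_0$ is a Morse critical point of $\lambda$, and the sign of $\det\hess\lambda(p_0)$ controls the local topology of $\lambda^{-1}(0)$: definiteness isolates $p_0$, producing cuspidal lips, while indefiniteness gives two transverse smooth branches, producing cuspidal beaks. Together with the wave-front assumption, an unfolding argument in the sense of KRSUY then produces the respective normal forms, and the supplementary condition $\eta\eta\lambda(p_0)\neq 0$ rules out a tangency of $\eta$ to either branch that would send the singularity to a deeper stratum. The hardest step is, as expected, the cuspidal cross cap: the absence of a Legendrian lift forces one to work entirely with the frontal structure and to show intrinsicality of $\phi_f$, whereas the remaining cases reduce to by-now routine normal-form calculations.
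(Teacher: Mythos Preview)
The paper does not prove this lemma; it is stated as a well-known result and attributed to \cite{FSUY2008, SUY2009, IS2010} with no argument given. Your outline is therefore already more detailed than anything in the paper, and the normal-form strategy you describe (adapted coordinates making $S(f)=\{v=0\}$ and $\eta=\partial_v$, tracking the order of vanishing of $\eta\lambda$ for the $A_k$-type fronts, using $\phi_f$ as the FSUY invariant for the cuspidal cross cap, and the Morse analysis of $\lambda$ for lips/beaks) is exactly the route taken in the cited references, so there is nothing to compare against in the present paper and your proposal is consistent with the literature it points to.
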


\subsection{Ruled surfaces and developable surfaces}\label{subsec2.1}
We review the notions and basic properties of ruled surfaces and developable surfaces. Let $\gamma:(\mathbb{R},0)\to(\mathbb{R}^3,0)$ and $\xi:(\mathbb{R},0)\to\mathbb{R}^3\backslash\{\mathbf{0}\}$ be analytic $C^{\infty}$-maps. Then we define a {\it ruled surface} $F:(\mathbb{R},0)\times \mathbb{R} \to \mathbb{R}^3$ by
\[
F(x,t)=\gamma(x)+t\xi(x).
\]
We call the map $\gamma$ a {base curve} and the map $\xi$ a {\it director curve}. If we fix a parameter $t_0\in\mathbb{R}$, the line $F(x,t_0)$ is called a {\it ruling} on $t_0$. We call a ruled surface with vanishing Gaussian curvature on the regular part a {\it developable surface}. If the direction of the director curve $\xi$ is constant, we call $F$ a {\it cylinder}. 

We set $\bar\xi(x)=\xi(x)/||\xi(x)||$ and $\bar{F}(x,t)=\gamma(x)+t\bar\xi(x)$. Then the image $\im F$ is equal to the image $\im\bar{F}$. Therefore, we may assume $\xi:(\mathbb{R},0)\to S^2$. Then it is known that a ruled surface $F$ is a developable surface if and only if 
\begin{equation}\label{dev-cond}
\det(\gamma'(x),\xi(x),\xi'(x))=0,
\end{equation}
for any $x\in(\mathbb{R},0)$ where $(\ )'=d/dx$ (cf., \cite{IT2003}). We say that $F$ is {\it non-cylindrical} if it holds that $\xi'(x)\ne\mathbf{0}$ for any $x\in (\mathbb{R},0)$. We define a curve $s(x)=F(x,t(x))$. Then $s(x)$ is called a {\it striction curve} if it satisfies that
\begin{equation}\label{st-cond}
\langle s'(x),\xi'(x)\rangle=0
\end{equation}
for any $x\in (\mathbb{R},0)$. If there exists a striction curve and the striction curve is constant, we call $F$ a {\it cone}. We assume that $F$ is non-cylindrical. The striction curve is given by
\[
s(x)=\gamma(x)-\frac{\langle\gamma'(x),\xi'(x)\rangle}{\langle\xi'(x),\xi'(x)\rangle}\xi(x).
\]
It is known that a singular point of the non-cylindrical ruled surface is located on the striction curve. Moreover the set of singular points of a non-cylindrical developable surface coincides with the striction curve\cite{IT2001}. It is known that developable surface $F$ is a frontal. It is known (cf., \cite{IT2003}) that a non-cylindrical developable surface $F$ is a wave front if and only if
\begin{equation}\label{dev-front-cond}
\psi(x)=\det\left(\xi(x),\xi'(x),\xi''(x)\right)\ne0.
\end{equation}
Let $F$ be a non-cylindrical developable surface. Then by \eqref{dev-cond}, there exist functions $\alpha, \beta:(\mathbb{R},0)\to\mathbb{R}$ such that $\gamma'(x)=\alpha(x)\xi(x)+\beta(x)\xi'(x)$. The striction curve of $F$ is written as $s(x)=\gamma(x)-\beta(x)\xi(x)$ and we see that $\lambda(x,t)=||\xi'(x)||(t+\beta(x))$. Then a singular point of $F$ is always non-degenerate. By Lemma \ref{cond-sing}, it is known that the following Lemma (cf., \cite{ISTa2017}).
\begin{lem}
With the above notations, a singular point $(x,-\beta(x))$ of $F$ is 
\begin{itemize}
\item
a cuspidal edge if and only if $\psi(x)\ne0$ and $\beta'(x)-\alpha(x)\ne0$,
\item
a swallowtail if and only if $\psi(x)\ne0$, $\beta'(x)-\alpha(x)=0$, and $\beta''(x)-\alpha'(x)\ne0$,
\end{itemize}
\end{lem}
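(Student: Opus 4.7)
The plan is to apply Lemma \ref{cond-sing} to the non-cylindrical developable surface $F(x,t)=\gamma(x)+t\xi(x)$. The wave-front part of the hypothesis is absorbed into the assumption $\psi(x)\ne0$ via \eqref{dev-front-cond}, so the remaining task is to choose a null vector field $\eta$ explicitly and to evaluate $\eta\lambda$ and $\eta\eta\lambda$ on the singular curve $\{t=-\beta(x)\}$, where $\lambda(x,t)=\|\xi'(x)\|(t+\beta(x))$ is already given in the excerpt.

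First I would differentiate $F$, using $\gamma'=\alpha\xi+\beta\xi'$, to obtain
\[
F_x=\alpha(x)\xi(x)+(t+\beta(x))\xi'(x),\qquad F_t=\xi(x).
\]
Restricting to $t=-\beta(x)$ gives $F_x=\alpha F_t$, so $\Ker dF$ along the singular curve is spanned by $\partial_x-\alpha(x)\partial_t$. Since this expression makes sense in a full neighborhood of the singular curve, I take $\eta=\partial_x-\alpha(x)\partial_t$ as the null vector field required by Lemma \ref{cond-sing}.

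Next I would compute the iterated derivatives of $\lambda$ along $\eta$. A direct calculation yields
\[
\eta\lambda=\|\xi'\|'(t+\beta)+\|\xi'\|(\beta'-\alpha),
\]
\[
\eta\eta\lambda=\|\xi'\|''(t+\beta)+2\|\xi'\|'(\beta'-\alpha)+\|\xi'\|(\beta''-\alpha').
\]
Evaluating at a singular point $(x,-\beta(x))$ kills the first summand in each expression, so
\[
\eta\lambda\big|_{t=-\beta(x)}=\|\xi'(x)\|(\beta'(x)-\alpha(x)),
\]
and, under the additional assumption $\beta'(x)=\alpha(x)$ needed for the swallowtail case,
\[
\eta\eta\lambda\big|_{t=-\beta(x)}=\|\xi'(x)\|(\beta''(x)-\alpha'(x)).
\]
Because $F$ is non-cylindrical, $\|\xi'(x)\|\ne0$, so the two displayed quantities are nonzero precisely when $\beta'-\alpha\ne0$ and $\beta''-\alpha'\ne0$ respectively. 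Combining this with Lemma \ref{cond-sing} gives both characterizations.

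The computation itself is routine; the only mild obstacle is the choice of $\eta$. One must check that the vector field $\partial_x-\alpha(x)\partial_t$ really defines a valid null extension off of $S(F)$ (it does, since its coefficients are smooth and it spans $\Ker dF$ on the singular set), so that the invariantly defined quantities $\eta\lambda$ and $\eta\eta\lambda$ of Lemma \ref{cond-sing} coincide with the directional derivatives computed above.
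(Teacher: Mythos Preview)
Your proposal is correct and follows exactly the route the paper indicates: the paper does not spell out a proof but simply remarks that the lemma is known (cf.\ \cite{ISTa2017}) and follows from Lemma~\ref{cond-sing} together with the already-recorded facts $\lambda(x,t)=\|\xi'(x)\|(t+\beta(x))$, the non-degeneracy of singular points, and the wave-front criterion \eqref{dev-front-cond}. Your explicit choice $\eta=\partial_x-\alpha(x)\partial_t$ and the evaluations of $\eta\lambda$, $\eta\eta\lambda$ on $t=-\beta(x)$ are precisely the computation that fills in this sketch.
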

We remark that $\beta'''(x)-\alpha''(x)=0$ is not equivalent to $\eta\eta\eta\lambda=0$. By \cite{FSUY2008}, we obtain that the following lemma.
\begin{lem}
With the above notations, a singular point $(x,-\beta(x))$ of $F$ is a cuspidal cross cap if and only if $\psi(x)=0$, $\psi'(x)\ne0$, and $\beta'(x)-\alpha(x)\ne0$.
\end{lem}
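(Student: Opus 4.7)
I plan to verify the three-part cuspidal cross cap criterion of Lemma~\ref{cond-sing} for the frontal case: at a non-degenerate singular point we need (i) $\eta\lambda\ne 0$, (ii) $\phi_F=0$, and (iii) $\phi_F'\ne 0$. The developable surface $F$ is already a frontal, and the text notes that every singular point is non-degenerate, so Lemma~\ref{cond-sing} applies directly.

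First, I would fix an explicit unit normal. Since $\xi$ takes values in $S^2$, we have $\xi\perp\xi'$, so $\nu(x)=\xi'(x)\times\xi(x)/\|\xi'(x)\|$ is a well-defined unit normal that does not depend on $t$. From $F_x=\alpha\xi+(t+\beta)\xi'$ and $F_t=\xi$, the image of $dF$ at $(x,-\beta(x))$ is spanned by $\xi(x)$, so $\eta:=\partial_x-\alpha(x)\partial_t$ is a null vector field along $S(F)$. Applying this to $\lambda=\|\xi'\|(t+\beta)$ gives
\begin{equation*}
\eta\lambda(x,-\beta(x))=\|\xi'(x)\|(\beta'(x)-\alpha(x)),
\end{equation*}
so (i) is equivalent to the hypothesis $\beta'(x)-\alpha(x)\ne 0$.

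Next, parametrizing the singular set by $\gamma(x)=(x,-\beta(x))$, one has $(F\circ\gamma)'(x)=s'(x)=(\alpha-\beta')\xi$. Because $\nu$ is independent of $t$, $d\nu(\eta)=\nu'$. The component of $\nu'$ parallel to $\xi'\times\xi$ drops out of $\det(s',\nu,\nu')$ because it is collinear with the second column, leaving only the contribution of $(\xi''\times\xi)/\|\xi'\|$. Applying the vector identity $(A\times B)\times(C\times B)=-\det(A,B,C)\,B$ with $A=\xi'$, $B=\xi$, $C=\xi''$, and then taking the dot product with $\xi$, yields $\det(\xi,\xi'\times\xi,\xi''\times\xi)=\psi$. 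Hence
\begin{equation*}
\phi_F(x)=\frac{(\alpha(x)-\beta'(x))\psi(x)}{\|\xi'(x)\|^2},
\end{equation*}
and under (i) this vanishes exactly when $\psi(x)=0$, giving (ii).

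Finally, differentiating the factored expression above and evaluating at a point where $\psi(x)=0$, only the term in which the derivative falls on $\psi$ survives, producing
\begin{equation*}
\phi_F'(x)=\frac{(\alpha(x)-\beta'(x))\psi'(x)}{\|\xi'(x)\|^2}.
\end{equation*}
Under $\beta'-\alpha\ne 0$, condition (iii) is therefore equivalent to $\psi'(x)\ne 0$, and combining the three steps yields the stated characterization. The main obstacle is the triple-product identity $\det(\xi,\xi'\times\xi,\xi''\times\xi)=\psi$; once this compact formula is in hand, everything reduces to short differentiations and the remaining issue is only sign bookkeeping, which does not affect the vanishing conditions.
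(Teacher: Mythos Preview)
Your argument is correct: the computations of $\eta\lambda$, $\phi_F$, and $\phi_F'$ are all accurate, and the vector identity $\det(\xi,\xi'\times\xi,\xi''\times\xi)=\psi$ follows exactly as you indicate from $(A\times B)\times(C\times B)=-\det(A,B,C)\,B$ together with $|\xi|=1$. The paper itself does not supply a proof for this lemma but simply attributes it to \cite{FSUY2008}; your explicit verification via the criterion in Lemma~\ref{cond-sing} is precisely the computation that reference carries out, so you are filling in the details the paper omits rather than taking a different route.
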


\section{Pseudo cylindrical ruled surfaces}\label{sec3}
Non-cylindrical ruled surface and cylinder are classical and well studied. But these ruled surfaces have a property that the derivative of the directior curve $\xi'(x)$ is identically equal to $0$ or not. On the other hands, if we have $\xi'(x)=\tilde\xi(x)x^k (k\geq1)$, then the ruled surface is neither a cylinder nor a non-cylindrical ruled surface. We study this case.

\subsection{Fundamental properties of pseudo cylindrical ruled surfaces}\label{subsec3.1}
Let $g:(\mathbb{R},0)\to\mathbb{R}^3$ be a $C^{\infty}$-map. We say that $g$ is {\it of finite multiplicity} if there exist a integer $m\in\mathbb{Z}_{\geq0}$ and a $C^{\infty}$-map $\tilde g:(\mathbb{R},0)\to\mathbb{R}^3$ with the following property:
\[
g(x)=\tilde g(x)x^m, g(0)\ne\mathbf{0}.
\]
We define a pseudo cylindrical ruled surface. 
\begin{defn}[{\cite[Section 4.1]{H2024}}]
Let $\xi'(x)$ be of finite multiplicity. Then the ruled surface $F(x,t)=\gamma(x)+t\xi(x)$ is called a {\it $k$-th pseudo cylindrical} if 
\[
\xi'(x)=\tilde\xi(x)x^k, \tilde\xi(0)\ne\mathbf{0}
\]
holds for $k\in\mathbb{Z}_{\geq0}$.
\end{defn}
We remark that the notion of the pseudo-cylindrical ruled surface which is not cylinder but have properties that is similar to cylinder is given, and a $0$-th pseudo cylindrical ruled surface is a non-cylindrical ruled surface.
We take a unit vector field $\xi_d:(\mathbb{R},0)\to S^2$ by
\[
\xi_d(x)=\frac{\tilde\xi(x)}{||\tilde\xi(x)||}.
\]
We give a orthonormal frame $\{\xi(x),\xi_d(x),\xi(x)\times\xi_d(x)\}$ along $F$. We have the following Frenet-Serret type formula
\[
\left(
\begin{array}{c}
\xi'(x)\\
\xi_d'(x)\\
(\xi(x)\times\xi_d(x))'
\end{array}
\right)
=\left(
\begin{array}{ccc}
0&\delta(x)&0\\
-\delta(x)&0&\rho(x)\\
0&-\rho(x)&0
\end{array}
\right)
\left(
\begin{array}{c}
\xi(x)\\
\xi_d(x)\\
\xi(x)\times\xi_d(x)
\end{array}
\right),
\]
where 
\begin{equation}\label{delta-defn}
\delta(x)=\langle\xi'(x),\xi_d(x)\rangle=||\tilde\xi(x)||x^k,
\end{equation}
\begin{equation}\label{rho-defn}
\rho(x)=\langle\xi_d'(x),\xi(x)\times\xi_d(x)\rangle=\det(\xi(x),\xi_d(x),\xi_d'(x)).
\end{equation}
We define functions $p,q,r:(\mathbb{R},0)\to\mathbb{R}^3$ by
\begin{equation}\label{pqr-defn}
\gamma'(x)=p(x)\xi(x)+q(x)\xi_d(x)+r(x)(\xi(x)\times\xi_d(x)).
\end{equation}
Then it holds that
\[
\gamma''=(p'-q\delta)\xi+(p\delta+q'-r\rho)\xi_d+(q\rho+r')(\xi\times\xi_d),
\]
and we also have
\[
F_x\times F_t=-(q+t\delta)(\xi\times\xi_d)+r\xi_d.
\]
Thus if $F$ has a singularity at $(x_0,t_0)$, then it holds that $-(q(x_0)+t_0\delta(x_0))=r(x_0)=0$. On the regular parts of $F$, we define a unit normal vector $\nu=(F_x\times F_t)/|F_x\times F_t|$. Let $F$ be regular at $(x,t)$.
The coefficients of the first fundamental forms are given by 
\[
E=p(x)^2+(q(x)+t\delta(x))^2+r(x)^2,\ F=p(x),\ G=1.
\]
On the regular parts of $F$, the coefficients of the second fundamental form are given by
\[
L=\frac{pr\delta+(q'+t\delta')r-(q+t\delta)r'-((q+t\delta)^2+r^2)\rho}{|F_x\times F_t|},\ 
M=\frac{r\delta}{|F_x\times F_t|},\ N=0.
\]
Then we obtain that the Gaussian curvature and the mean curvature
\[
K=-\frac{r\delta}{|F_x\times F_t|^{3/2}},\quad
H=-\frac{-pr\delta+(q'+t\delta')r-(q+t\delta)r'-((q+t\delta)^2+r^2)\rho}{2|F_x\times F_t|^{3/2}}.
\]
Since $F$ is $k$-th pseudo cylindrical, $F$ is a developable surface if and only if $r(x)\equiv0$, where $\equiv$ stands for the equality holds identically. Let $F$ be a developable surface. Then we have $K\equiv0$ and
\[
H=\frac{\rho}{2|F_x\times F_t|}.
\]

\subsection{Singularities of pseudo cylindrical ruled surfaces}\label{subsec3.2}
We set maps $\tilde q, \tilde r:(\mathbb{R},0)\to\mathbb{R}^3$ and integers $Q,R\in\mathbb{Z}_{\geq0}$ such that
\[
q(x)=\tilde q(x)x^Q, \quad r(x)=\tilde r(x)x^R.
\]
We remark that we do not assume that $\tilde q(0)\ne0$, $\tilde r(0)\ne0$. We define the extended notion of the striction curve.
\begin{defn}
Let a curve on a ruled surface $F$ be $s(x)=F(x,t(x))$. Then $s(x)$ is called a {\it striction curve} if it satisfies that $\langle s'(x), \xi_d(x) \rangle \equiv0$.
\end{defn}
The above definition is the same as in \cite[Definition 3.1.]{LLW2021}.
We have that
\begin{align*}
\langle s'(x),\xi_d(x)\rangle
&=\langle \gamma'(x)+t'(x)\xi(x)+t(x)\xi'(x),\xi_d(x)\rangle\\
&=q(x)+t(x)\delta(x).
\end{align*}
If $\langle s'(x),\xi_d(x)\rangle \equiv0$, then it holds that
\begin{equation}\label{t(x)}
t(x)=-\frac{q(x)}{\delta(x)}=-\frac{\tilde q(x)}{||\tilde\xi(x)||}x^{Q-k}.
\end{equation}
We call $t(x)$ the {\it pre-striction function} of $F$. The striction curve is given by
\begin{equation}\label{st.cur}
s(x)=\gamma(x)-\frac{\tilde q(x)}{||\tilde\xi(x)||}x^{Q-k}\xi(x).
\end{equation}
By the equation \eqref{st.cur}, the striction curve $s(x)$ is well-defined at $x=0$ if and only if $Q\geq k$. 
Assume that $Q\geq k$. Then we have
\[
s'(x)=\left\{p(x)-\left(\frac{q(x)}{\delta(x)}\right)' \right\}\xi(x)+r(x)(\xi(x)\times\xi_d(x)).
\]
We define a set $V=\{(x,t)\in(\mathbb{R},0)\times\mathbb{R}\ |\ x\ne0\}$. Then we have the following proposition.
\begin{prop}
Let $Q\geq k$. Then
\begin{itemize}
\item
$F(S(F)\cap V)$ coincides with the striction curve of $F$ if and only if $r\equiv0$, 
\item
$F(S(F)\cap V)=\emptyset$ if and only if there exist a function $\tilde r$ and an integer $R$ such that $r(x)=\tilde r(x)x^R$, $\tilde r(0)\ne0$.
\end{itemize}
\end{prop}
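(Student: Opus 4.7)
The plan is to reduce both bullets to a direct analysis of the two scalar equations cutting out $S(F)$, using the formula
\[
F_x \times F_t = -(q(x) + t\delta(x))\,(\xi(x) \times \xi_d(x)) + r(x)\,\xi_d(x)
\]
recorded in Section~\ref{subsec3.1}. Since $\xi_d$ and $\xi \times \xi_d$ are linearly independent, $(x,t) \in S(F)$ is equivalent to the pair of scalar equations
\[
q(x) + t\delta(x) = 0 \quad \text{and} \quad r(x) = 0.
\]
On $V$ one has $x \neq 0$, so $\delta(x) = \|\tilde\xi(x)\|\,x^k$ is nonzero (shrinking the germ if necessary, since $\tilde\xi(0) \neq 0$), and the first equation determines $t$ uniquely as the pre-striction function $t(x) = -q(x)/\delta(x)$ from \eqref{t(x)}. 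Consequently
\[
F(S(F) \cap V) = \{s(x) : x \neq 0 \text{ and } r(x) = 0\}.
\]

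For the first bullet, the ``if'' direction is immediate from this identity: when $r \equiv 0$ the right-hand side is exactly the image of $s$ restricted to $x \neq 0$, which as a set-germ at $s(0)$ coincides with the image of the striction curve (the assumption $Q \geq k$ ensures $s$ is smooth at $0$, so no extra point is ``missing''). For the ``only if'' direction I would argue by contradiction: if $r \not\equiv 0$, the analyticity of $\gamma$ and $\xi$ forces $r$ to have isolated zeros near $0$, so $\{x \neq 0 : r(x) = 0\}$ is discrete, and $F(S(F) \cap V)$ is therefore a (possibly empty) discrete subset of $\mathbb{R}^3$, which cannot coincide with the smooth curve $s$.

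For the second bullet, the ``if'' direction is the observation that if $r(x) = \tilde r(x)\,x^R$ with $\tilde r(0) \neq 0$, then $\tilde r$ is nonvanishing on a neighborhood of $0$, so $r$ has no zeros away from $x = 0$ (and none at all if $R = 0$); hence $F(S(F) \cap V) = \emptyset$. Conversely, if $F(S(F) \cap V) = \emptyset$, then $r(x) \neq 0$ for all $x \neq 0$ in the germ, so in particular $r \not\equiv 0$; analyticity then provides a Taylor expansion $r(x) = c_R x^R + \cdots$ with first nonzero coefficient $c_R \neq 0$, and factoring out $x^R$ gives $r(x) = \tilde r(x)\,x^R$ with $\tilde r(0) = c_R \neq 0$.

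The only real subtlety, and the step I would flag carefully in the write-up, is the appeal to analyticity of $\gamma$ and $\xi$ (hence of $r$) in the ``only if'' halves: without it, a flat nonzero $C^\infty$ function $r$ at $0$ would invalidate both the discreteness argument in the first bullet and the existence of a finite order $R$ in the second bullet. Everything else is a matter of reading off the singular locus from the $F_x \times F_t$ formula already available in Section~\ref{subsec3.1}.
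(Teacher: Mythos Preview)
Your proof is correct and follows essentially the same route as the paper: both read off the singular locus from the formula for $F_x\times F_t$, identify $S(F)\cap V$ with $\{(x,t(x)) : x\neq 0,\ r(x)=0\}$, and then split into the cases $r\equiv 0$ versus $r$ of finite multiplicity. The paper's write-up is terser (it cites \cite{IT2001} for the developable ``if'' direction and leaves the analyticity appeal implicit), whereas you spell out explicitly the role of analyticity in both ``only if'' halves, which is a welcome clarification rather than a different argument.
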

\begin{proof}
If $r\equiv0$, $F$ is a developable surface. It is known that the set of singular points of a non-cylindrical developable surface coincides with the striction curve \cite{IT2001}. We assume that $F(S(F)\cap V)$ coincides with the striction curve of $F$. By the condition of singularities, it holds that $q(x)+t\delta(x)\equiv0$ and $r(x)\equiv0$ on the striction curve. We set $r(x)=\tilde r(x)x^R$, $\tilde r(0)\ne0$. Then it holds that $r(x)\ne0$ at $x\in V$. Thus $F(S(F)\cap V)=\emptyset$. We assume $F(S(F)\cap V)=\emptyset$. Then it holds that $r(x)\ne0$ on the striction curve.
\end{proof}
The case that satisfies $r\equiv0$ (that is, $F$ is a developable surface) is discussed in the next section.
We assume that there exist a function $\tilde r$ and an integer $R$ such that $r(x)=\tilde r(x)x^R$, $\tilde r(0)\ne0$. Then if $q(0)+t\delta(0)=0$, $F$ has a singularity on $x=0$. we divide the following three cases
\[
\rm{(i)}\  \delta(0)\ne0, \quad
\rm{(ii)}\ q(0)\ne0,\ \rm{and}\ \delta(0)=0, \quad
\rm{(iii)}\ q(0)=\delta(0)=0.
\]
If $F$ satisfies case (i), then $F$ is a non-cylindrical ruled surface, and there exists a striction curve $s(x)$ and $F$ has a singularity at $x=0$. If $F$ satisfies case (ii), then $F$ is regular on $x=0$ because it holds that $q(0)+t\delta(0)\ne0$ for any $t$. Therefore the striction curve approaches the ruling on $x=0$ as an asymptotic line. If $F$ satisfies case (iii), the ruling of $x=0$ is all singularities of $F$.

\begin{ex}
Assume that $\xi_1(x)=(1,x,0)$, $\xi_2(x)=(1,x^3,x^4)$,
\[
\gamma_1(x)=(0,0,x^2),\quad
\gamma_2(x)=(0,3x,2x^2),\quad
\gamma_3(x)=(0,\frac{3}{2}x^2,\frac{4}{3}x^3).
\]
We define ruled surfaces $g_1$, $g_2$, $g_3$ by
\[
g_1(x,t)=\gamma_1(x)+t\xi_1(x),\quad
g_2(x,t)=\gamma_2(x)+t\xi_2(x),\quad
g_3(x,t)=\gamma_3(x)+t\xi_2(x).
\]
Then $g_1$ is a $0$-th pseudo cylindrical ruled surface, that is, $g_1$ is a non-cylindrical ruled surface. The surface $g_1$ is an example which appears the Whitney umbrella singularity. The ruled surface $g_2$ and $g_3$ are $2$-nd pseudo ruled surface. The surface $g_2$ (respectively, $g_3$) is an example which is satisfied with case (ii) (respectively, case (iii)).
\end{ex}

\begin{figure}[ht]
\centering
\includegraphics[width=0.7\columnwidth]{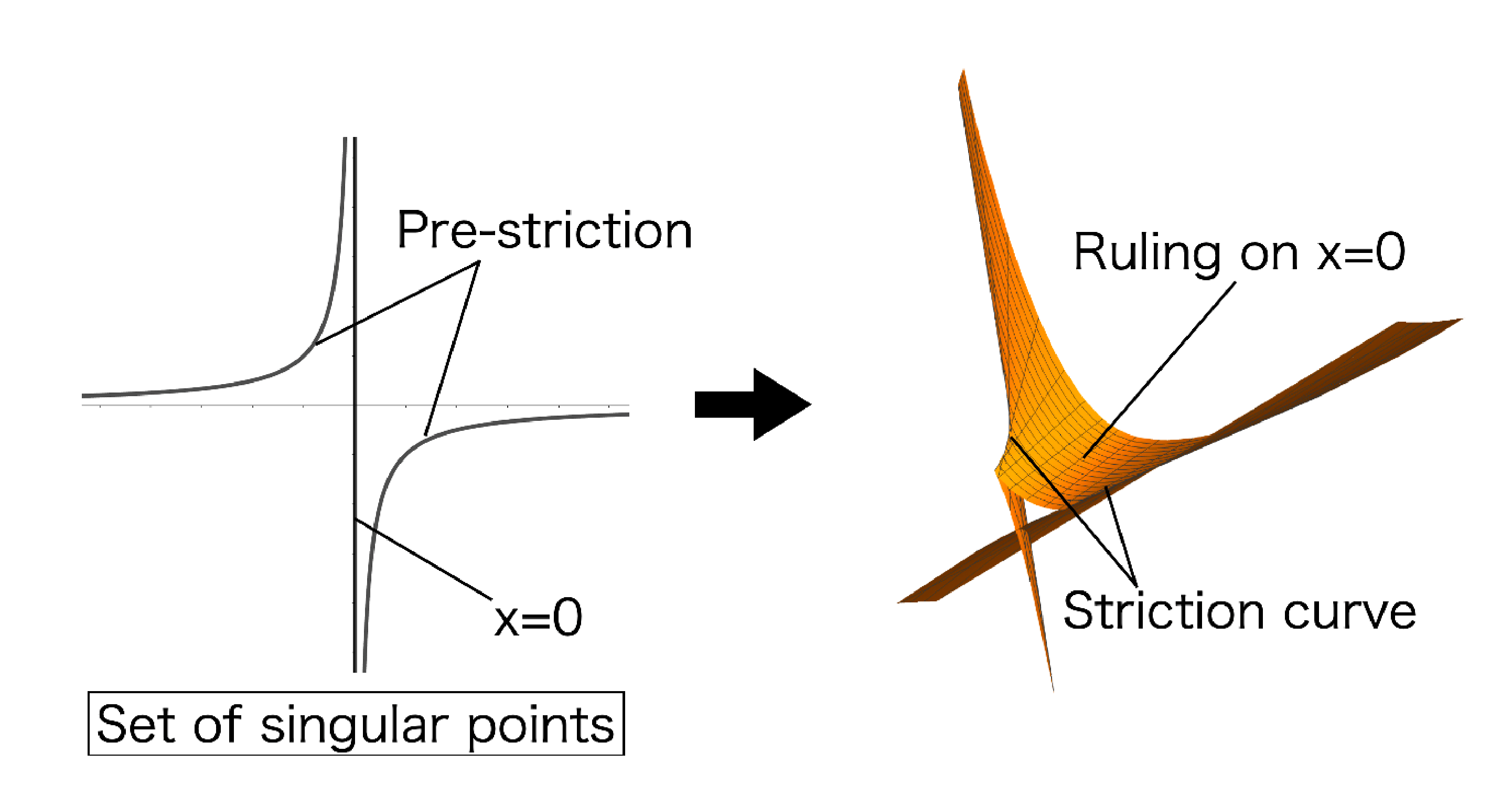}
\caption{Ruled surfaces  $g_2$}
\end{figure}

\begin{figure}[ht]
\centering
\includegraphics[width=0.7\columnwidth]{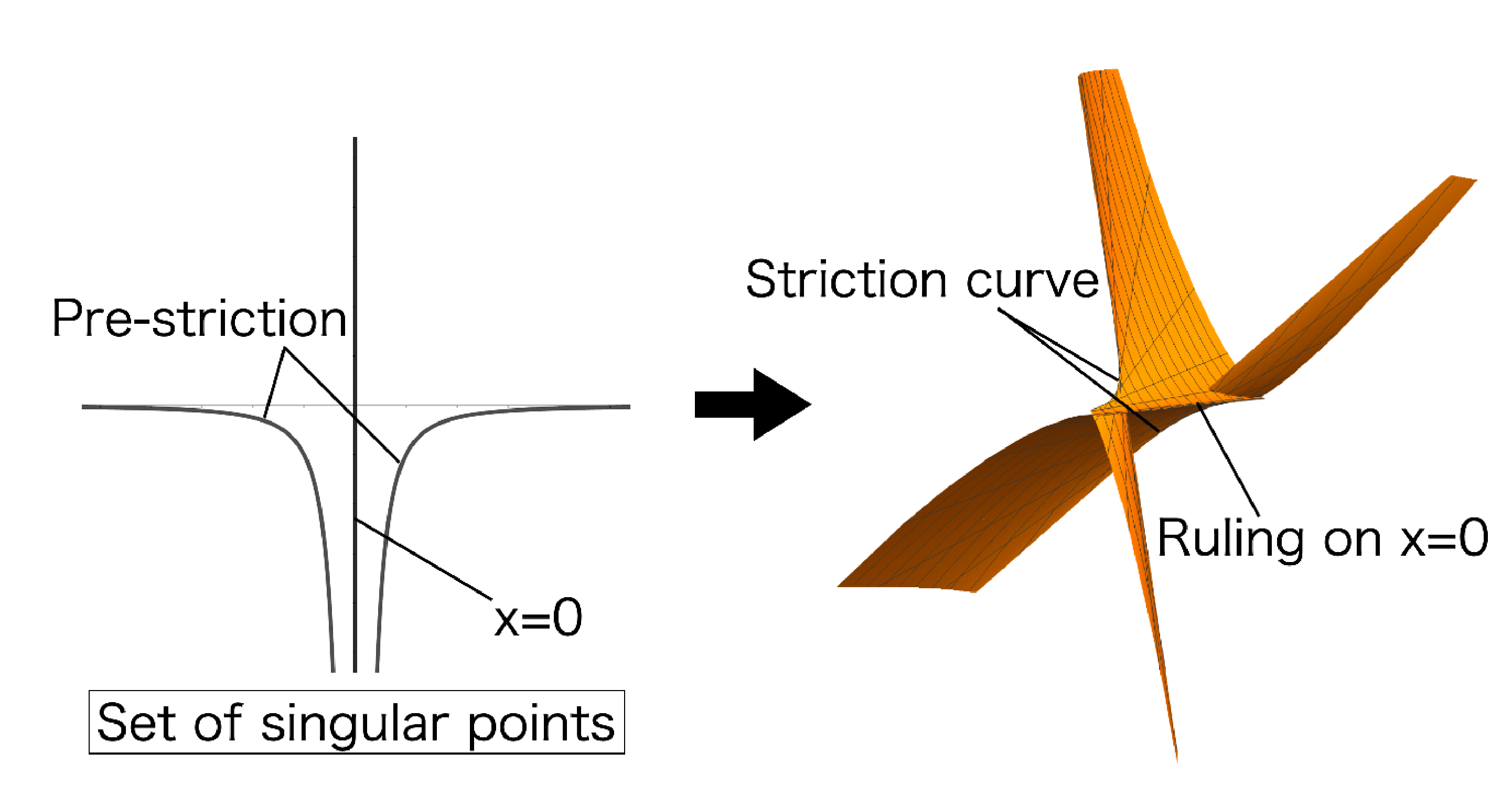}
\caption{Ruled surfaces $g_3$}
\end{figure}

Next, we obtain conditions of frontal and wave front of the ruled surface $F$.
Let $F$ have a singularities at $x=0$. Then it holds that $-(q(0)+t\delta(0))=r(0)=0$.  Let $m=\min\{Q,R\}$ be $k>m$, and set
\[
A(x,t)=-\tilde q(x)x^{Q-m}+t||\tilde\xi(x)|| x^{k-m},\quad B(x)=\tilde r(x)x^{R-m}.
\]
We obtain the unit normal vector of $F$:
\[
\nu(x,t)=\frac{A(x,t)(\xi\times\xi_d)+B(x)\xi_d}{\sqrt{A(x,t)^2+B(x)^2}}.
\]
Then $\nu(x,t)$ is well-defined on $x=0$. Thus we obtain the following lemma.
\begin{lem}
Suppose that $F$ is a ruled surface. If $k>\min\{Q,R\}$, then $F$ is a frontal.
\end{lem}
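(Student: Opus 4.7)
The plan is to exhibit the map
$$\nu(x,t)=\frac{A(x,t)(\xi\times\xi_d)+B(x)\xi_d}{\sqrt{A(x,t)^2+B(x)^2}}$$
displayed just above the lemma as a $C^\infty$ unit normal vector field on a neighborhood of the singular fiber $\{x=0\}$; by the definition of Section~2.1 this forces $F$ to be a frontal.

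First I would factor $x^m$ out of the formula $F_x\times F_t=-(q+t\delta)(\xi\times\xi_d)+r\xi_d$ from Section~3.1. Since $Q\ge m$, $R\ge m$ and the hypothesis $k>m$ makes $k-m\ge 1$, the three exponents $Q-m$, $R-m$, $k-m$ are all non-negative, so the rescaled coefficients are smooth in $(x,t)$ and we obtain $F_x\times F_t=x^m\bigl(A(x,t)(\xi\times\xi_d)+B(x)\xi_d\bigr)$ with the normalization $-(q+t\delta)=x^m A$ and $r=x^m B$. On the open set $\{x\ne 0\}$, $\nu$ therefore agrees up to sign with $(F_x\times F_t)/|F_x\times F_t|$, so it is automatically a unit normal there. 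The orthogonality propagates to the whole domain by a direct algebraic check: writing $F_t=\xi$ and $F_x=p\xi+(q+t\delta)\xi_d+r(\xi\times\xi_d)$ in the orthonormal frame $\{\xi,\xi_d,\xi\times\xi_d\}$, one reads off $\langle\nu,F_t\rangle=0$ and $\sqrt{A^2+B^2}\,\langle\nu,F_x\rangle=Ar+B(q+t\delta)=x^m(AB-AB)=0$, while $|\nu|=1$ is immediate from the orthonormality.

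The main step, and the main obstacle, is to verify that $A^2+B^2$ is bounded away from zero on a neighborhood of $\{x=0\}$ so that $\nu$ is genuinely $C^\infty$ across the singular fiber. I would argue pointwise at $(0,t_0)$ by splitting on which of $Q$, $R$ realizes the minimum $m=\min\{Q,R\}$. If $R=m$, then $B(0)=\tilde r(0)\ne 0$ by the standing nondegeneracy assumption on $r$. If instead $Q=m<R$, then $B(0)=0$; but the hypothesis $k>m$ forces the $t$-dependent term in $A(0,t_0)$ to vanish, leaving $A(0,t_0)=-\tilde q(0)\ne 0$, where $Q$ is taken to be the exact order of vanishing of $q$. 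In both cases $A(0,t_0)^2+B(0)^2>0$, and continuity propagates this inequality to an open neighborhood of the entire ruling $\{x=0\}$. Consequently $\nu$ is $C^\infty$, of unit length and orthogonal to $F_x,F_t$, establishing that $F$ is a frontal.
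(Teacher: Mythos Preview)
Your proof is correct and follows exactly the paper's approach: the paper's argument consists solely of displaying $\nu$ and asserting that it ``is well-defined on $x=0$,'' whereas you supply the missing details by factoring $F_x\times F_t=x^m\bigl(A(\xi\times\xi_d)+B\xi_d\bigr)$, checking orthogonality and unit length, and verifying $A(0,t_0)^2+B(0)^2>0$ case by case. Your caveat that in the case $Q=m<R$ one needs $\tilde q(0)\ne 0$ (i.e.\ $Q$ should be the exact order of vanishing of $q$) is an honest observation that the paper glosses over; in the paper's standing context $\tilde r(0)\ne 0$ is already assumed, and reading $Q$ as the exact order is the natural interpretation needed for the stated formula for $\nu$ to make sense.
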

And we have the proposition.
\begin{prop}\label{ruled-wave}
Suppose that $F$ is a ruled surface and $k>\min\{Q,R\}$. Then $F$ is a wave front if and only if it holds that
\[
\rho(x)(A(x,t)^2+B(x)^2)+A_x(x,t)B(x)-A(x,t)B'(x)\ne0.
\]
\end{prop}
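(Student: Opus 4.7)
The plan is to translate the wave-front condition for $F$ into a condition on the Legendrian lift $L=(F,\nu)\colon(\mathbb{R}^2,0)\to\mathbb{R}^3\times S^2$ and then compute it directly using the frame $\{\xi,\xi_d,\xi\times\xi_d\}$. Since $F$ is a frontal by the preceding lemma, $\nu$ is smooth, and $L$ is automatically an immersion wherever $F$ is regular; so the non-trivial content lies at the singular points of $F$.

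First I would locate the singular set. Because $F_t=\xi\ne\mathbf{0}$, singular points occur exactly where $F_x$ is a scalar multiple of $\xi$, i.e.\ where $q+t\delta=r=0$. Since $r(x)=\tilde r(x)x^R$ with $\tilde r(0)\ne 0$, all singular points lie on the ruling $x=0$, and at such a point $\Ker dF$ is spanned by $\eta=\partial_x-p\,\partial_t$. Thus $F$ is a wave front if and only if $\nu_x-p\,\nu_t\ne\mathbf{0}$ at every singular point. Writing the unnormalized normal as $N=A(\xi\times\xi_d)+B\xi_d$, so that $\nu=N/||N||$, this vanishing is equivalent to requiring that $N_x-pN_t$ be parallel to $N$.

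Using the Frenet--Serret type relations $\xi'=\delta\xi_d$, $\xi_d'=-\delta\xi+\rho(\xi\times\xi_d)$, $(\xi\times\xi_d)'=-\rho\xi_d$, one computes
\[
N_x=-B\delta\,\xi+(B'-A\rho)\xi_d+(A_x+B\rho)(\xi\times\xi_d),\qquad N_t=A_t(\xi\times\xi_d).
\]
At $x=0$, the hypothesis $k\ge 1$ (implied by $k>m\ge 0$) gives $\delta(0)=0$, which kills the $\xi$-component, and the hypothesis $k>m$ gives $A_t(0,t)=||\tilde\xi(0)||\cdot 0^{k-m}=0$, which kills the $N_t$-term entirely. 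Hence at a singular point, both $N$ and $N_x-pN_t$ lie in the plane $\langle\xi_d,\xi\times\xi_d\rangle$, so their parallelism is detected by the vanishing of the $2\times 2$ determinant
\[
\det\begin{pmatrix} B & B'-A\rho \\ A & A_x+B\rho\end{pmatrix}=\rho(A^2+B^2)+A_x B-AB',
\]
which is precisely the stated criterion.

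The main obstacle is verifying that the $N_t$-contribution really drops out on the singular locus: this is where the hypothesis $k>\min\{Q,R\}$ enters in an essential way, through the vanishing of $A_t$ at $x=0$. One also needs the implicit observation that $A^2+B^2$ does not vanish at the singular points, so that $\nu$ is literally defined there by the quoted formula rather than by a removable-singularity extension; this follows from $m=\min\{Q,R\}$, which forces at least one of $A(0,t)$ and $B(0)$ to be nonzero.
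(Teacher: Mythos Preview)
Your proof is correct and follows essentially the same route as the paper: both reduce the immersion condition for $L=(F,\nu)$ at a singular point on $x=0$ to the nonvanishing of the derivative of the normal in the null direction, use $\delta(0)=0$ to kill the $\xi$-component, and use $k>m$ (equivalently $\nu_t(0,t)=0$) to discard the $\partial_t$-contribution. Your presentation is slightly cleaner in that you work with the unnormalized $N$ and phrase the final step as a $2\times2$ parallelism determinant, whereas the paper differentiates $\nu$ directly, but the computation and the key observations are the same.
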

\begin{proof}
We have
\[
\left|
\begin{array}{cc}
F_x &\nu_x\\
F_t &\nu_t
\end{array}
\right|
=
\left|
\begin{array}{cc}
p\xi+(q+t\delta)\xi_d+r(\xi\times\xi_d) &\nu_x\\
\xi&\nu_t
\end{array}
\right|.
\]
On $x=0$, it holds that $\nu_t(0,t)=0$. Therefore, we have
\[
\nu_x(x,t)=\frac{-B(A^2+B^2)\delta\xi+\{\rho(A^2+B^2)+A_xB-AB'\}\{-A\xi_d+B(\xi\times\xi_d)\}}{(A(x,t)^2+B(x)^2)^{3/2}}.
\]
Since $\delta(0)=0$, it holds that $\nu_x\ne0$ if and only if it holds that $\rho(A^2+B^2)+A_xB-AB'$.
\end{proof}

We assume $k>\min\{Q,R\}$. Then it holds that $R\geq 1$ and $F$ is satisfied with case (iii). Thus we obtain $m=\min\{Q,R\}\geq1$. The signed area density $\lambda$ of $F$ is given by
\[
\lambda(x,t)=\det(F_x(x,t),F_t(x,t),\nu(x,t))=x^m\sqrt{A(x,t)^2+B(x)^2}.
\]
Then we obtain the following proposition.
\begin{prop}\label{ruled-deg}
Suppose that $k>\min\{Q,R\}$. A singular point $(0,t_0)$ is a non-degenerate point of $F$ if and only if $m=1$.
\end{prop}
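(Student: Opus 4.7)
The plan is to compute $d\lambda$ at the singular point $(0,t_0)$ directly from the expression $\lambda(x,t)=x^m\sqrt{A(x,t)^2+B(x)^2}$ obtained just before the statement. Setting $W(x,t):=\sqrt{A(x,t)^2+B(x)^2}$ so that $\lambda=x^m W$, the first observation is that $\lambda_t=x^m W_t$ vanishes identically on $\{x=0\}$. Thus $\lambda_t(0,t_0)=0$ automatically, and non-degeneracy amounts to $\lambda_x(0,t_0)\ne 0$.

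Next I would differentiate $\lambda_x=m x^{m-1}W+x^m W_x$ and evaluate at $x=0$. If $m\ge 2$, both summands carry positive powers of $x$ and vanish at $x=0$, giving $d\lambda(0,t_0)=0$ and hence degeneracy, which settles one half of the equivalence. If $m=1$, the formula collapses to $\lambda_x(0,t_0)=W(0,t_0)$, so it remains only to check $W(0,t_0)\ne 0$.

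To verify $W(0,t_0)\ne 0$ when $m=1$, I would use that $k>m=1$ forces the $t$-dependent piece $t\|\tilde\xi(x)\|x^{k-m}$ of $A$ to vanish at $x=0$. Because $\min\{Q,R\}=1$, at least one of $Q,R$ equals $1$. In the subcase $R=1$ one gets $B(0)=\tilde r(0)\ne 0$ from the standing hypothesis, hence $W(0,t_0)\ge|\tilde r(0)|>0$; in the subcase $R>1$, $Q=1$ one gets $B(0)=0$ and $A(0,t_0)=-\tilde q(0)$, which is nonzero provided $Q$ is the true multiplicity of $q$.

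The main obstacle is this last subcase: justifying $\tilde q(0)\ne 0$ when $Q=m=1$ and $R>1$. Once the convention that $Q,R$ denote the actual multiplicities of $q,r$ is fixed — as is natural for the equivalence to hold cleanly — the proof reduces to the short differentiation sketched above, and the proposition follows.
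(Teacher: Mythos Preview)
Your approach is exactly the paper's: the paper's proof consists only of writing out the two formulas
\[
\lambda_x=\frac{m(A^2+B^2)x^{m-1}+(AA_x+BB')x^m}{\sqrt{A^2+B^2}},\qquad
\lambda_t=\frac{AA_t\,x^m}{\sqrt{A^2+B^2}},
\]
and leaving the conclusion to the reader, so your computation with $W=\sqrt{A^2+B^2}$ is the same argument spelled out more carefully.

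Regarding the ``obstacle'' you flag: it is not actually an obstacle in context. The nonvanishing $W(0,t_0)=\sqrt{A(0,t_0)^2+B(0)^2}\ne 0$ is already implicit in the paper's prior assertion that the unit normal $\nu=(A(\xi\times\xi_d)+B\,\xi_d)/\sqrt{A^2+B^2}$ is well-defined on $x=0$; without that, neither $\nu$ nor $\lambda=\det(F_x,F_t,\nu)$ would make sense there. So you do not need a separate case analysis on $Q$ versus $R$, nor a convention about $\tilde q(0)\ne 0$: the hypothesis that $F$ is a frontal (Lemma preceding the proposition) carries exactly the nondegeneracy of $A^2+B^2$ that you require.
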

\begin{proof}
We have $d\lambda=\lambda_xdx+\lambda_tdt$. Then it holds that 
\begin{flalign*}
&\lambda_x(x,t)=\frac{m(A(x,t)^2+B(x)^2)x^{m-1}+(A(x,t)A_x(x,t)+B(x)B'(x))x^m}{\sqrt{A(x,t)^2+B(x)^2}},\\
&\lambda_t(x,t)=\frac{A(x,t)A_t(x,t)x^m}{\sqrt{A(x,t)^2+B(x)^2}}.
\end{flalign*}
\end{proof}
A null vector $\eta$ of $F$ is given by
\begin{equation}\label{eta}
\eta=\partial x-p(x)\partial t.
\end{equation}
We have
\[
\eta\lambda(x,t)=\frac{m(A(x,t)^2+B(x)^2)x^{m-1}+(A(x,t)A_x(x,t)+B(x)B'(x)-p(x)A(x,t)A_t(x,t))x^m}{\sqrt{A(x,t)^2+B(x)^2}}.
\]
If $F$ is a wave front and $m=1$, then the singularity is a cuspidal edge.

\subsection{Singularities of pseudo cylindrical developable surfaces}\label{subsec3.2}
Let $F$ be a developable surface. Then it holds that 
\begin{align*}
F_x(x,t)\times F_t(x,t)= -(q(x)+t\delta(x))(\xi(x)\times\xi_d(x))
\end{align*}
We can write that there exists an integer $Q\in\mathbb{Z}_{\geq0}$ such that $q(x)=\tilde q(x)x^Q$, and if $k\geq Q$, then we assume that $\tilde q(x)\ne0$. We assume that $F$ has a singularity at the origin. Then $F$ satisfies that 
\[
\tilde q(x)x^Q+t||\tilde\xi(x)||x^k=0,
\] at $x=0$. For this equation, if $Q\geq k$, then we can define $t(x)=x^{Q-k}\tilde q(x)/ ||\tilde\xi(x)||$ on $x=0$, and if $Q,k\geq1$, then it holds that $x^m(\tilde q(x)x^{Q-m}+t||\tilde\xi(x)||x^{k-m})=0$ where $m=\min\{Q,k\}$.
In other words, if $\tilde q(x)x^Q+t||\tilde\xi(x)||x^k=0$, it holds that
\begin{itemize}
\item
if $Q\geq k$, there exist the siriction curve on the ruling on $x=0$.
\item
if $Q,k\geq1$, the ruling on $x=0$ is included in the singular point set.
\end{itemize}
The following table \ref{tab1} shows the conditions under which a $k$-th pseudo cylindrical developable surface has singularities on the ruling $x=0$, where $t(x)$ is the pre-striction function \eqref{t(x)}. 
\begin{table}[h]
\begin{center}
\caption{Condition of singular points}
\label{tab1}
\begin{tabular}{cccc}
\hline
& & ruling on $x=0$  & $t(x)$ ($x\to0$)\\
\hline
$\rm(\,I\,)$&$k>Q=0$    &regular   &$\infty$\\
$\rm(I\hspace{-.1em}I)$&$Q\geq k=0$    &regular\footnotemark[1]   &$-q(x)/\delta(x)$\\
$\rm(I\hspace{-.1em}I\hspace{-.1em}I)$&$k>Q\geq1$    &singularities &$\infty$\\
$\rm(I\hspace{-.1em}V)$&$Q\geq k\geq1$    &singularities   &$-q(x)/\delta(x)$\\
\hline
\end{tabular}
\end{center}
\end{table}
\footnotetext[1]{We assume $t\ne-q/\delta$. If $t=-q/\delta$, then F has a singularity on striction curve.}

We call a singularity a {\it Scherbak surface} if $f$ at $p_0$ is $\mathcal{A}$-equivalent to $(u,v)\mapsto(u,v^3+uv^2,12v^5+10uv^4)$ at $0$. We assume that there exists the striction curve (That is, we assume $Q\geq k$). If $F$ satisfies the case $\rm(I\hspace{-.1em}I)$, then the singular points set of $F$ coincides with the striction curve and may be cuspidal edge or swallowtail (Figure \ref{reg-st}). If $F$ satisfies the case $\rm(I\hspace{-.1em}V)$, then the singular points set of $F$ coincides with the striction curve and the ruling $x=0$ and may be cuspidal beaks, Schbak surface(Figure \ref{sing-st1} and \ref{sing-st2}).

\begin{figure}[ht]
\centering
\includegraphics[width=0.7\columnwidth]{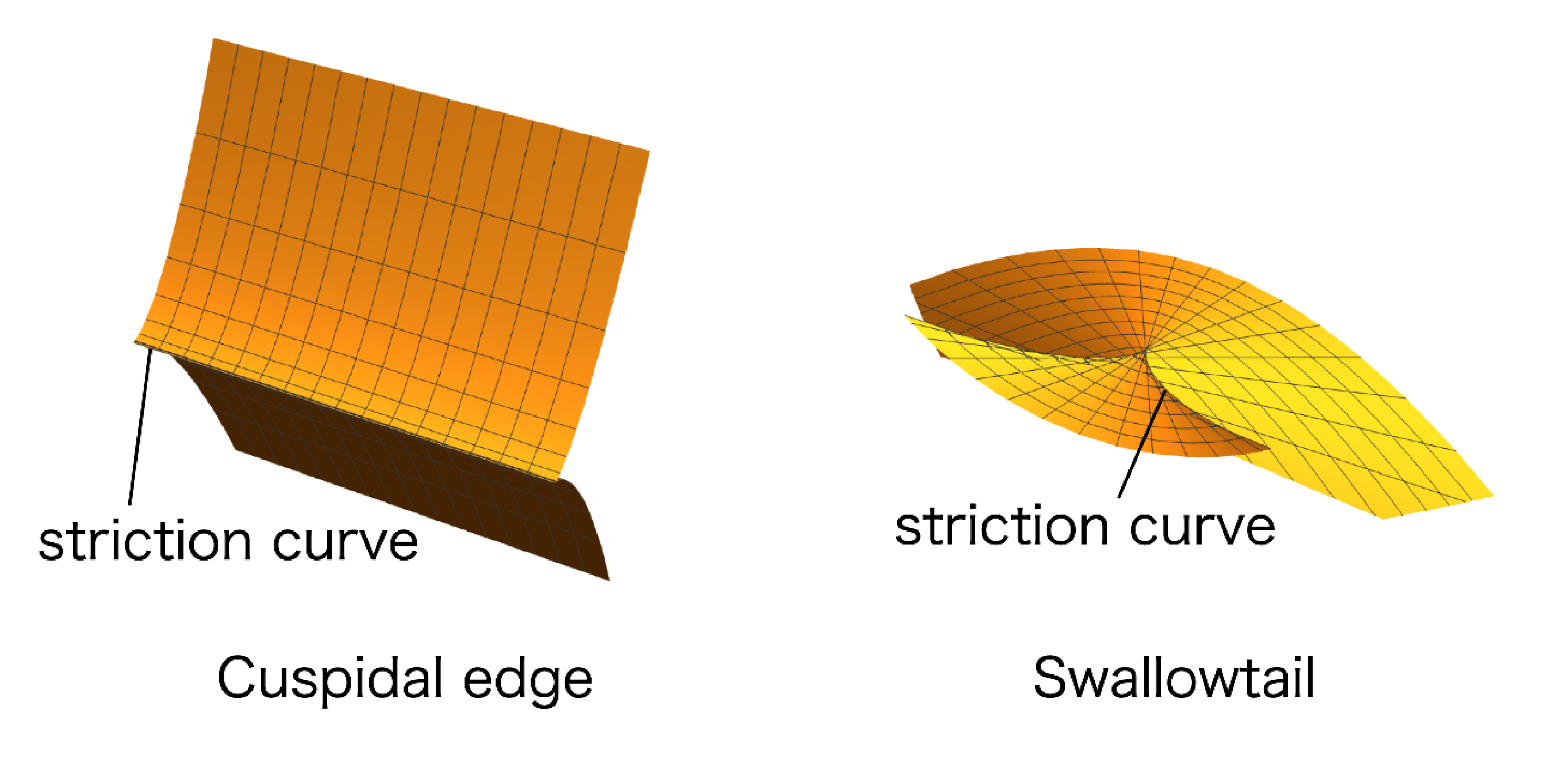}
\caption{Case $\rm(I\hspace{-.1em}I)$ (cuspidal edge (left) and swallowtail (right))}
\label{reg-st}
\end{figure}

\begin{figure}[ht]
\centering
\includegraphics[width=0.7\columnwidth]{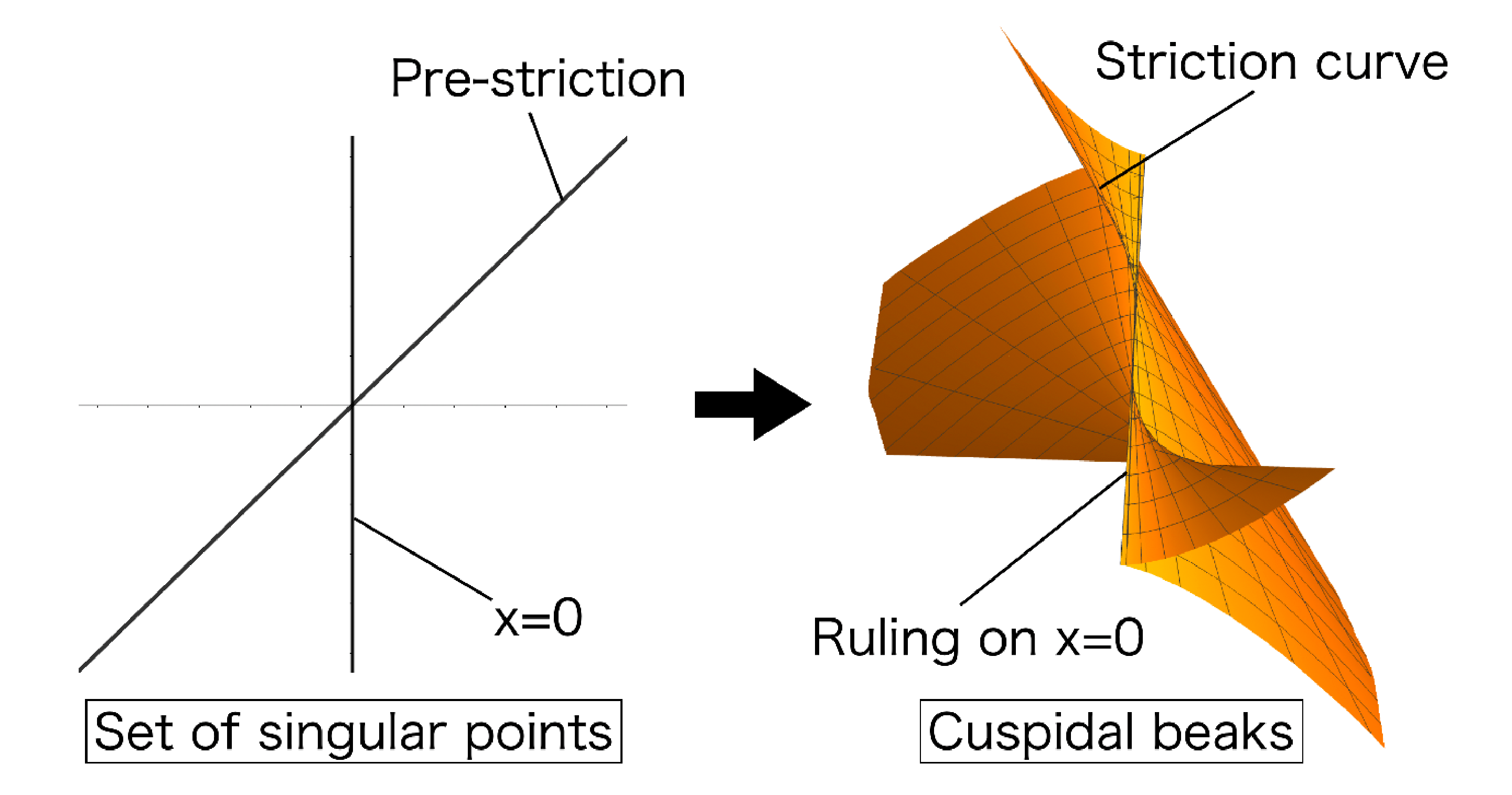}
\caption{Case $\rm(I\hspace{-.1em}V)$ (cuspidal beaks)}
\label{sing-st1}
\end{figure}

\begin{figure}[ht]
\centering
\includegraphics[width=0.7\columnwidth]{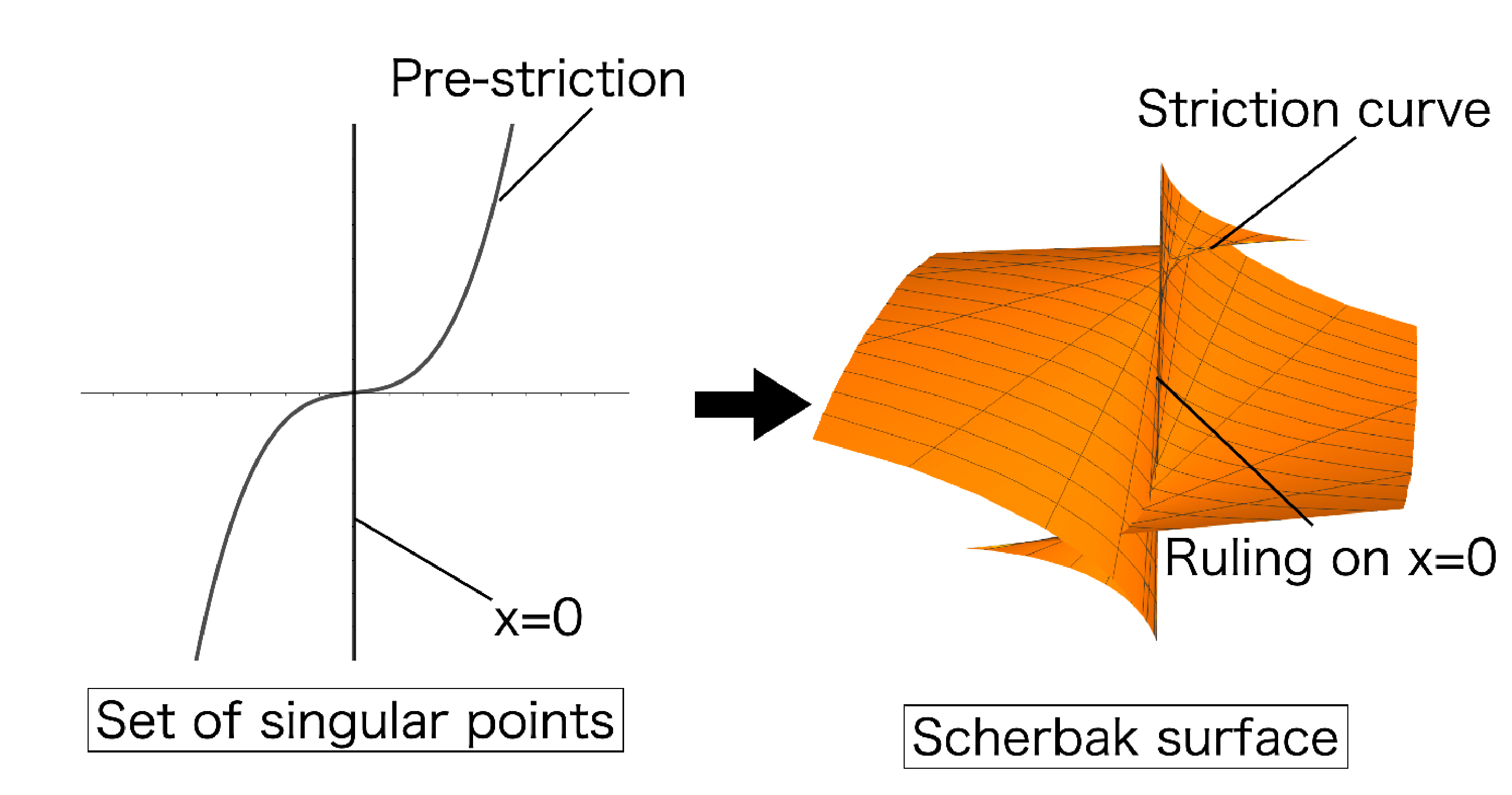}
\caption{Case $\rm(I\hspace{-.1em}V)$ (Scherbak surface)}
\label{sing-st2}
\end{figure}

\newpage

If $k>Q\geq1$, then the ruling on $x=0$ is included in the singular points set and the striction curve approaches the ruling on $x=0$ as an asymptotic line. We give an example of developable surface without striction curve on $x=0$.

\begin{ex}
Assume that $\xi_3(x)=(1,x^4,x^5)/\sqrt{1+x^8+x^{10}}$, $\xi_{3d}(x)=\xi_3'(x)/||\xi_3'(x)||$, and
\begin{align*}
\gamma_4(x)=\int_0^x\xi_{3d}(x)dx,\quad 
\gamma_5(x)=\int_0^xx\xi_{3d}(x)dx.
\end{align*}
We define ruled surfaces $g_1$, $g_2$;
\begin{equation}
g_4(x,t)=\gamma_1(x)+\xi(x),\quad
g_5(x,t)=\gamma_2(x)+\xi(x).
\end{equation}
Then $g_4$ (respectively, $g_5$) is a $3$-rd pseudo cylindrical developable surface and satisfies case $\rm(\,I\,)$ (respectively, a $3$-rd pseudo cylindrical developable surface and satisfies case $\rm(I\hspace{-.1em}I\hspace{-.1em}I)$ singularities).
\end{ex}

\begin{figure}[ht]
\centering
\includegraphics[width=0.7\columnwidth]{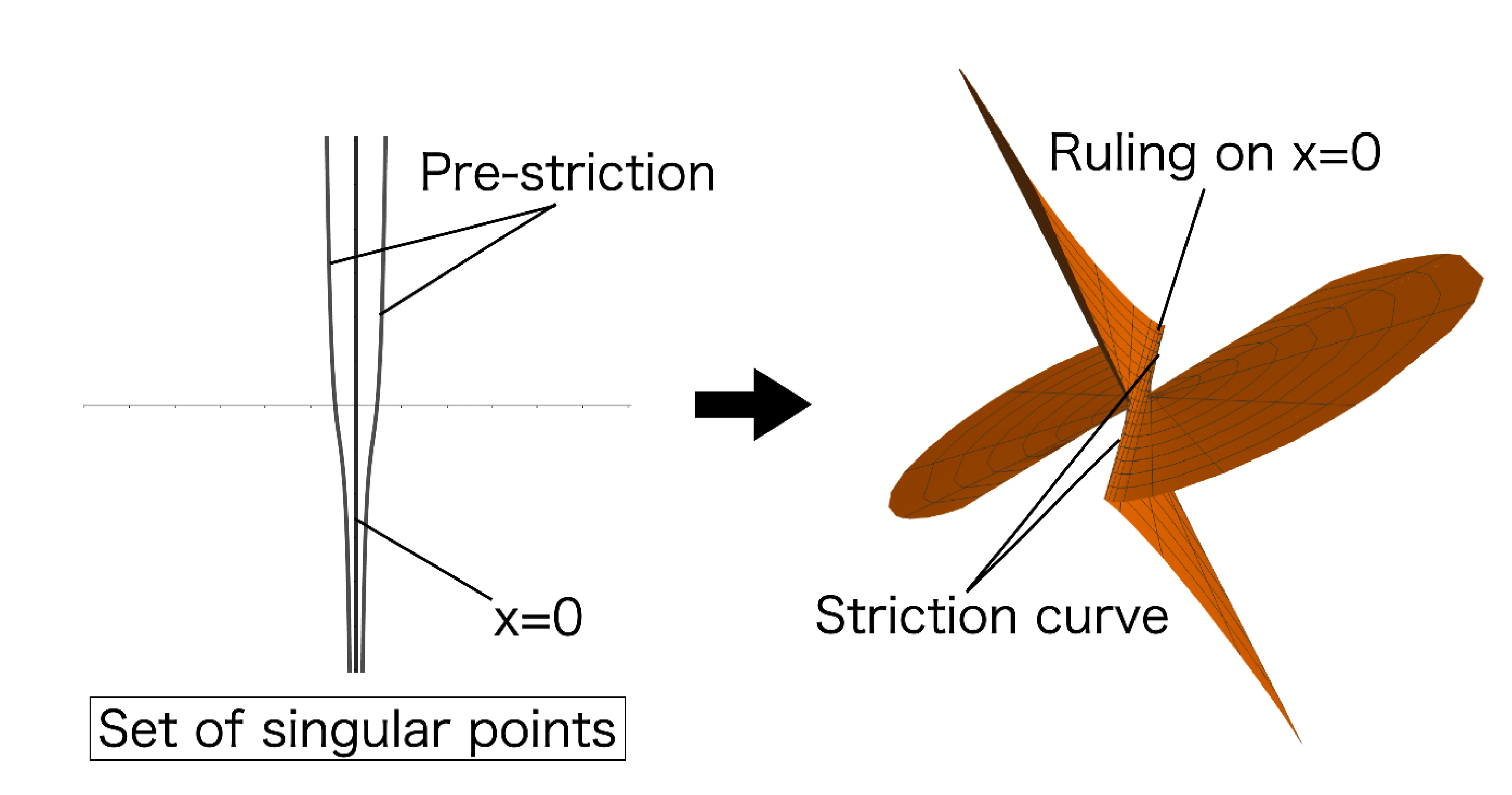}
\caption{Developable surface $g_4$ (case $\rm(\,I\,)$)}
\end{figure}

\begin{figure}[ht]
\centering
\includegraphics[width=0.7\columnwidth]{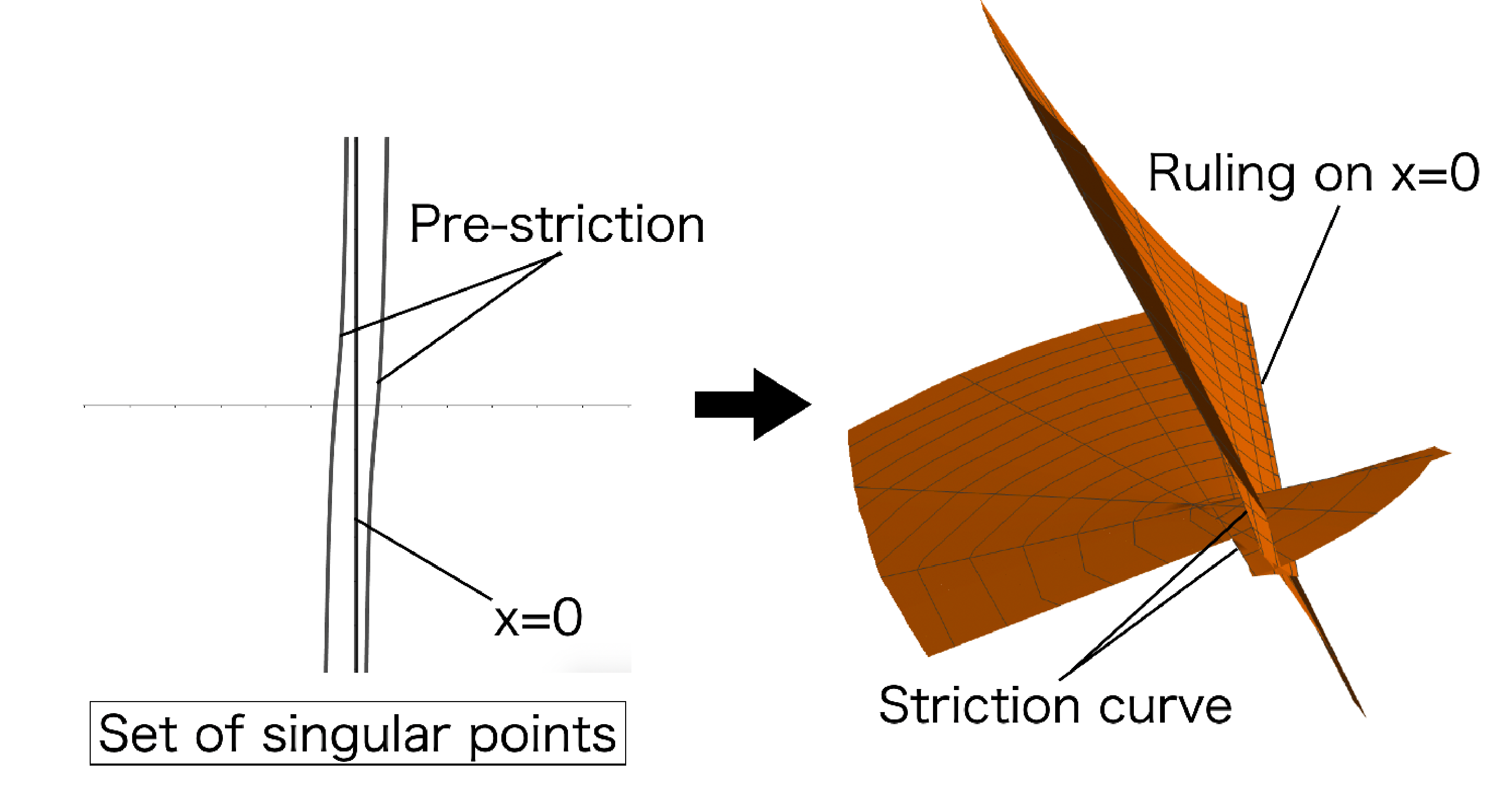}
\caption{Developable surface $g_5$ (case $\rm(I\hspace{-.1em}I\hspace{-.1em}I)$)}
\end{figure}

If $k>Q\geq1$, we obtain the following lemma
\begin{lem}
Suppose that $k>Q\geq1$. Then a point $p_0\in \{(x,t)\in(\mathbb{R}^2,0) | x=0\}$ is a non-degenerate point of $F$ if and only if $Q=1$. Therefore if $Q=1$, the singular point $p_0$ is a cuspidal edge.
\end{lem}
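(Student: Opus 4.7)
My plan is to compute the signed area density $\lambda$ of the pseudo-cylindrical developable surface $F$ explicitly, read off $d\lambda$ along the singular ruling $x=0$, and then invoke Lemma \ref{cond-sing}. Since $F$ is developable we have $r\equiv 0$, and the orthonormal frame $\{\xi,\xi_d,\xi\times\xi_d\}$ supplies a globally smooth unit normal $\nu=\xi\times\xi_d$, so $F$ is already a frontal. Using $F_x=p\xi+(q+t\delta)\xi_d$ and $F_t=\xi$, the triple product gives
\[
\lambda(x,t)=\det(F_x,F_t,\nu)=-(q(x)+t\delta(x)).
\]
Substituting $q(x)=\tilde q(x)x^Q$ with $\tilde q(0)\ne 0$ (by the convention adopted in the range $k\geq Q$) and $\delta(x)=||\tilde\xi(x)||\,x^k$, the hypothesis $k>Q\geq 1$ allows the factoring
\[
\lambda(x,t)=-x^Q\bigl(\tilde q(x)+t\,||\tilde\xi(x)||\,x^{k-Q}\bigr).
\]

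I would then evaluate the two partials at $p_0=(0,t_0)$. Since $k\geq 2$ we have $\lambda_t(x,t)=-||\tilde\xi(x)||\,x^k$, so $\lambda_t(0,t_0)=0$. Applying the product rule to the factored form of $\lambda$ gives $\lambda_x$ as $-Qx^{Q-1}\tilde q(x)$ plus terms carrying an extra factor of $x$; only the leading term survives at $x=0$, and it equals $-\tilde q(0)\ne 0$ when $Q=1$ and vanishes when $Q\geq 2$. Therefore $d\lambda(p_0)\ne 0$ if and only if $Q=1$, proving the non-degeneracy half of the lemma. When $Q=1$, the cuspidal edge conclusion follows from the first bullet of Lemma \ref{cond-sing} once one checks $\eta\lambda(p_0)\ne 0$: with the null vector field $\eta=\partial x-p(x)\partial t$ from \eqref{eta}, $\eta\lambda(0,t_0)=\lambda_x(0,t_0)-p(0)\lambda_t(0,t_0)=-\tilde q(0)\ne 0$, which is exactly the required condition.

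The main subtle point I expect is the wave front hypothesis behind Lemma \ref{cond-sing}: because $\delta(0)=0$ one has $\psi(0)=\det(\xi,\xi',\xi'')(0)=\delta(0)^2\rho(0)=0$, so the non-cylindrical wave front criterion \eqref{dev-front-cond} cannot be invoked directly. Instead I would examine the Legendrian lift $L=(F,\nu)$ at $p_0$: differentiating $\nu=\xi\times\xi_d$ yields $\nu_x=-\rho\xi_d$ and $\nu_t=0$, and one verifies that $L_x$ and $L_t$ are linearly independent at $p_0$ precisely when $\rho(0)\ne 0$. Thus the wave front hypothesis at $p_0$ amounts to the implicit condition $\rho(0)\ne 0$, which must accompany the cuspidal edge statement.
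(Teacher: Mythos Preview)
Your argument is correct and follows essentially the same route as the paper: compute $\lambda=-(q+t\delta)$, observe $\lambda_t(0,t_0)=-\delta(0)=0$ and $\lambda_x(0,t_0)=-q'(0)$, conclude non-degeneracy holds iff $Q=1$, and then check $\eta\lambda(0,t_0)\ne 0$ with $\eta=\partial_x-p\,\partial_t$. Your additional remark that the cuspidal-edge conclusion tacitly requires $\rho(0)\ne 0$ (so that $F$ is a wave front at $p_0$) is a valid caveat that the paper leaves implicit here and only establishes later in Proposition~\ref{wave-rho}.
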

\begin{proof}
the signed area density of $F$ holds that 
\[
\lambda(x,t)=\det(F_x,F_t,\nu)=-(q(x)+t\delta(x)).
\]
We have
\[
d\lambda(x,t)=-(q'(x)+t\delta'(x))dx+\delta(x)dt.
\]
Since $k>Q\geq1$, $d\lambda(0,t)\ne0$ if and only if $Q=1$. And if $Q=1$, $\eta\lambda(0,t)\ne0$.
\end{proof}

\subsection{Geometric meanings of invariants $\delta$, $\rho$, and $\sigma$}
The invariants $\delta$, $\rho$, and $\sigma$ give the conditions of singularities.

We have a unit normal vector $\nu(x)=\xi(x)\times\xi_d(x)$. That is, a developable surface is a frontal. Then the signed area density of $F$ holds that 
\[
\lambda(x,t)=\det(F_x,F_t,\nu)=-(q(x)+t\delta(x)).
\]
We have
\[
d\lambda(x,t)=-(q'(x)+t\delta'(x))dx+\delta(x)dt.
\]
Suppose that $F$ has singularities on $x=0$. That is, it holds that $k>Q\geq1$ or $Q\geq k$. Then we have the following lemma.
\begin{prop}\label{deg-delta}
Suppose that $k>Q\geq1$. Then a point $(0,t)$ is a non-degenerate point of $F$ if and only if $Q=1$. Also, assume that $Q\geq k$. Then we have the following assertion.
\begin{itemize}
\item
A singular point $(0,-q(0)/\delta(0))$ is a non-degenerate singular point if and only if $\delta(0)\ne0$, 
\item
a singular point $(0,-q(0)/\delta(0))$ is a degenerate singular point if and only if $\delta(0)=0$.
\end{itemize}
\end{prop}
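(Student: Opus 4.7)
The plan is to work directly with the signed area density $\lambda(x,t) = -(q(x)+t\delta(x))$ and its differential $d\lambda = -(q'(x)+t\delta'(x))\,dx + \delta(x)\,dt$ displayed just above the proposition, together with the factorizations $q(x) = \tilde q(x)x^Q$ and $\delta(x) = ||\tilde\xi(x)||x^k$ from \eqref{delta-defn} and the definition of $Q$.

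For the first assertion, I would observe that the hypothesis $k > Q \geq 1$ forces $k \geq 2$, so differentiating $\delta(x) = ||\tilde\xi(x)||x^k$ gives $\delta(0) = \delta'(0) = 0$. Hence at a point $(0,t)$ on the ruling $x = 0$,
\[
d\lambda(0,t) = -q'(0)\,dx,
\]
and since $q'(0) = \tilde q(0) \neq 0$ when $Q = 1$ while $q'(0) = 0$ when $Q \geq 2$, the non-degeneracy condition $d\lambda(0,t) \neq 0$ reduces exactly to $Q = 1$.

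For the second assertion I would exploit the pre-striction curve. Since $Q \geq k$, the pre-striction function
\[
t(x) = -\frac{q(x)}{\delta(x)} = -\frac{\tilde q(x)}{||\tilde\xi(x)||}x^{Q-k}
\]
extends smoothly through $x = 0$, and by construction $\lambda(x,t(x)) \equiv 0$, so $(x,t(x))$ parameterises the singular set near $x=0$. Differentiating this identity in $x$ yields $\lambda_x(x,t(x)) + t'(x)\lambda_t(x,t(x)) = 0$, and combined with $\lambda_t = -\delta(x)$ this produces the clean formula
\[
d\lambda|_{(x,t(x))} = \delta(x)\bigl(t'(x)\,dx - dt\bigr).
\]
Evaluated at $x = 0$, this one-form is nonzero if and only if $\delta(0) \neq 0$, which settles both bullets of the second assertion simultaneously.

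The main subtlety, rather than a genuine technical obstacle, is that the symbol $-q(0)/\delta(0)$ appearing in the statement is literally indeterminate when $\delta(0) = 0$, so I would clarify at the outset that it is to be read as the value $t(0)$ of the smooth extension above, whose existence relies precisely on the hypothesis $Q \geq k$. Once this is pinned down, everything reduces to the one-line computation that $d\lambda$ on the pre-striction curve is proportional to $\delta$.
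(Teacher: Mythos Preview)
Your proof is correct. For the first assertion ($k>Q\geq 1$) you do exactly what the paper does: reduce $d\lambda(0,t)$ to $-q'(0)\,dx$ using $\delta(0)=\delta'(0)=0$ and then read off $Q=1$ from the factorisation of $q$.

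For the second assertion ($Q\geq k$) your route genuinely differs from the paper's. The paper substitutes $t=-\tilde q(x)x^{Q-k}/\|\tilde\xi(x)\|$ directly into $d\lambda$, fully expands $q'$ and $\delta'$ via the product rule, and then tracks the lowest powers of $x$ to see that the $dx$-coefficient vanishes at $0$ whenever $k\geq 1$ while the $dt$-coefficient is $\|\tilde\xi(0)\|x^k|_{x=0}$. You instead exploit the identity $\lambda(x,t(x))\equiv 0$ along the pre-striction curve to obtain the factorised form $d\lambda|_{(x,t(x))}=\delta(x)\bigl(t'(x)\,dx-dt\bigr)$ in one stroke. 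Your argument is shorter and makes transparent \emph{why} everything is governed by $\delta$; the paper's explicit expansion, while more laborious, has the side benefit of displaying the precise orders of vanishing of $\lambda_x$ and $\lambda_t$ separately, which is useful later when computing $\eta^{(n)}\lambda$. Your remark that the symbol $-q(0)/\delta(0)$ must be interpreted as the smooth limit $t(0)$ is a clarification the paper leaves implicit.
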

\begin{proof}
Let $k>Q\geq1$. Then it holds that
\begin{align*}
d\lambda(x,t)&=-(q'(x)+t\delta'(x))dx+\delta(x)dt\\
&=-(Q\tilde q(x)x^{Q-1}+\tilde q'(x)x^Q+t(k||\tilde\xi(x)||x^{k-1}+||\tilde\xi(x)||'x^k))dx+||\tilde\xi(x)||x^kdt.
\end{align*}
Since $k>Q$, it holds that $d\lambda(0,-q(0)/\delta(0))\ne0$ if and only if $Q=1$. Also, we assume that $Q\geq k$. Then 
\begin{align*}
d\lambda(x,t)&=-(Q\tilde q(x)x^{Q-1}+\tilde q'(x)x^Q+\frac{\tilde q(x)x^{Q}}{||\tilde\xi(x)||x^k}(k||\tilde\xi(x)||x^{k-1}+||\tilde\xi(x)||'x^k))dx+||\tilde\xi(x)||x^kdt\\
&=-((Q-k)\tilde q(x)x^{Q-1}+\frac{\tilde q'(x)||\tilde\xi(x)||+\tilde q(x)||\tilde\xi(x)||'}{||\tilde\xi(x)||}x^Q)dx+||\tilde\xi(x)||x^kdt
\end{align*}
Since $Q\geq k$, we have the above assertion.
\end{proof}

Next, the necessary and sufficient condition that $F$ is a wave front is given the following theorem.
\begin{prop}\label{wave-rho}
The developable surface $F$ is a wave front at $x=0$ if and only if $\rho(0)\ne0$.
\end{prop}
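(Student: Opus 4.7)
The strategy is to evaluate the Legendrian lift $L=(F,\nu)$ at singular points on the ruling $\{x=0\}$ and check when its Jacobian has rank $2$. Since $F$ is developable, $r\equiv 0$ as noted in Section 3.1, so the unit normal reduces to $\nu(x)=\xi(x)\times\xi_d(x)$, a function of $x$ alone; in particular $\nu_t\equiv 0$.

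First I would read off from the Frenet--Serret type formula that $\nu'(x)=(\xi\times\xi_d)'=-\rho(x)\xi_d(x)$, so $\nu_x=-\rho(x)\xi_d(x)$. Since $L$ is automatically an immersion at regular points of $F$, only the singular points on $\{x=0\}$---those $(0,t_0)$ with $q(0)+t_0\delta(0)=0$---need to be examined. Using $F_x=p\xi+(q+t\delta)\xi_d+r(\xi\times\xi_d)$ from Section 3.1, together with $r\equiv 0$ and $q(0)+t_0\delta(0)=0$, I obtain $F_x|_{(0,t_0)}=p(0)\xi(0)$ and $F_t|_{(0,t_0)}=\xi(0)$.

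Thus the two columns of $dL|_{(0,t_0)}$ in $\mathbb{R}^3\times T_\nu S^2$ are
\[
(F_x,\nu_x)=(p(0)\xi(0),\,-\rho(0)\xi_d(0)) \quad \text{and} \quad (F_t,\nu_t)=(\xi(0),\,0).
\]
Since $\{\xi(0),\xi_d(0),\xi(0)\times\xi_d(0)\}$ is orthonormal, these two columns are linearly independent if and only if $\rho(0)\ne 0$: any scalar relation $(p(0)\xi(0),-\rho(0)\xi_d(0))=c(\xi(0),0)$ forces $c=p(0)$ and $\rho(0)=0$, and conversely $\rho(0)=0$ makes the two columns proportional via $c=p(0)$. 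This yields the wave front criterion. There is no real obstacle here; the argument is a direct linear-algebra computation that essentially simplifies the proof of Proposition \ref{ruled-wave} under $r\equiv 0$, where the condition $\rho(A^2+B^2)+A_xB-AB'\ne 0$ collapses to $\rho(0)\ne 0$ because $B\equiv 0$.
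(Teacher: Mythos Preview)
Your proof is correct and follows essentially the same approach as the paper: both compute $\nu=\xi\times\xi_d$, use the Frenet--Serret formula to get $\nu'=-\rho\,\xi_d$, and then check when the Jacobian of $L=(F,\nu)$ has rank~$2$, reducing the question to whether $\rho(0)\ne 0$. Your version is in fact a bit more careful, since you explicitly restrict attention to the singular points $(0,t_0)$ with $q(0)+t_0\delta(0)=0$ (where the rank of $dF$ drops) before testing linear independence of the columns of $dL$; the paper's proof leaves this step implicit.
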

\begin{proof}
Let $L(x,t)=(F(x,t),\nu(x))$ be a $C^{\infty}$-map. Then the Jacobian matrix $J$ of $L$ is given by
\[
J(x,t)=\left|
\begin{array}{cc}
F_x & \nu_x \\
F_t & \nu_t
\end{array}
\right|
=\left|
\begin{array}{cc}
p(x)\xi(x)+(q(x)+t\delta(x))\xi_d(x) & \nu'(x)\\
\xi(x) & 0
\end{array}
\right|.
\]
The map $L$ is an immersion if and only if $J(x,t)=-\xi(x)\nu'(x)\ne0$. It holds  that
\begin{align*}
\nu'(x)&=(\xi(x)\times\xi_d(x))'\\
&=\delta(x)\xi_d(x)\times\xi_d(x)+\xi(x)\times\{-\delta(x)\xi(x)+\rho(x)(\xi(x)\times\xi_d(x))\}\\
&=-\rho(x).
\end{align*}
Since $\xi(0)\ne0$, $J\ne0$ if and only if $\rho\ne0$.
\end{proof}
We remark that if the developable surface $F$ is $0$-th pseudo cylindrical, then it satisfies $\rho(0)\ne0$ if and only if it holds the equation \eqref{dev-front-cond}.

A null vector $\eta$ of $F$ is given by \eqref{eta}.
Then it holds that
\[
\eta\lambda(x,t)=-(q'(x)+t\delta'(x))+p(x)\delta(x),
\]
\[
\eta\eta\lambda(x,t)=-(q''(x)+t\delta''(x))+p'(x)\delta(x)+2p(x)\delta'(x).
\]
Similarly calculating, it holds that
\[
\eta^{(n)}(x,t)=-(q^{(n)}(x)+t\delta^{(n)}(x))+\sum_{i=0}^{n-1} {}_n C_{i} p^{(n-i-1)}(x)\delta^{(i)}(x).
\]

If $k>Q\geq1$, then we have $\eta^{(Q)}(0,t)=-Q!\tilde q(0)$. Thus we obtain the following lemma.
\begin{lem}
Let $k>Q\geq1$ and $p_0=(0,t_0)$. Then for any $n\geq1$, it holds that
\begin{itemize}
\item
$\eta\lambda(p_0)\ne0$ if and only if $Q=1$,
\item
$\eta\lambda(p_0)=\cdots=\eta^{(n)}\lambda(p_0)=0,\ 
\eta^{(n+1)}\lambda(p_0)\ne0$ if and only if $Q=n+1$.
\end{itemize}
\end{lem}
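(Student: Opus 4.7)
The plan is to substitute the finite-multiplicity factorizations $q(x) = \tilde q(x) x^Q$ and $\delta(x) = \|\tilde\xi(x)\| x^k$ into the displayed formula
\[
\eta^{(n)}\lambda(x,t) = -(q^{(n)}(x) + t\delta^{(n)}(x)) + \sum_{i=0}^{n-1}\binom{n}{i} p^{(n-i-1)}(x)\delta^{(i)}(x),
\]
and then evaluate at $x=0$. First I would record the Leibniz-rule fact that for any smooth $h$ with $h(0)\ne0$, the function $h(x)x^m$ has all derivatives of order $<m$ vanishing at $0$, and its $m$-th derivative at $0$ equals $m!\,h(0)\ne0$. Applied to $\delta$, this gives $\delta^{(i)}(0)=0$ for every $i<k$, and applied to $q$ it gives $q^{(j)}(0)=0$ for every $j<Q$ together with $q^{(Q)}(0)=Q!\,\tilde q(0)\ne0$.

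Second, under the hypothesis $k>Q\ge1$, whenever $n\le Q$ one has $i\le n-1\le Q-1<k$ for every $i\in\{0,\dots,n-1\}$, so each $\delta^{(i)}(0)=0$ and likewise $\delta^{(n)}(0)=0$ since $n\le Q<k$. Evaluating the formula above at $p_0=(0,t_0)$ therefore kills both the $t\delta^{(n)}$ term and the entire summation, leaving simply
\[
\eta^{(n)}\lambda(p_0) = -q^{(n)}(0)
\]
for every $n$ with $1\le n\le Q$.

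Third, combining this identity with the information on $q^{(n)}(0)$ yields the lemma directly: for $n<Q$ we get $\eta^{(n)}\lambda(p_0)=0$, while for $n=Q$ we get $\eta^{(Q)}\lambda(p_0)=-Q!\,\tilde q(0)\ne0$. The first bullet is the special case $Q=1$; the second bullet is the case $Q=n+1$, in which $\eta^{(m)}\lambda(p_0)=0$ for all $m\le n$ and $\eta^{(n+1)}\lambda(p_0)\ne0$. Conversely, since the vanishing order of $\eta^{(\cdot)}\lambda(p_0)$ at $p_0$ determines $Q$ uniquely via the same identity, both equivalences follow.

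There is no real obstacle here: the computation is a direct substitution once the two factorizations are in hand. The only point requiring care is the bookkeeping verification that the sum over $i=0,\dots,n-1$ contributes nothing at $x=0$, which rests entirely on the strict inequality $k>Q$ that is built into the hypothesis.
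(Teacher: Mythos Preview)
Your proposal is correct and follows exactly the paper's approach: the paper records the formula for $\eta^{(n)}\lambda(x,t)$ and then, immediately before the lemma, states that $\eta^{(Q)}\lambda(0,t)=-Q!\,\tilde q(0)$ under $k>Q\ge1$, leaving the details implicit. You simply supply those details, verifying via the factorizations that all $\delta$-terms vanish at $x=0$ for $n\le Q$ and that the surviving term $-q^{(n)}(0)$ has exact vanishing order $Q$.
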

Suppose that $Q\geq k$. Then we obtain the following lemma and theorem.
\begin{thm}\label{thm-reg.ver}
Let $Q\geq k$ and the striction curve be regular at $p_0=(0,-q(0)/\delta(0))$. Then for any $n\geq1$, it holds that
\begin{itemize}
\item
$\eta\lambda(p_0)\ne0$ if and only if $k=0$,
\item
$\eta\lambda(p_0)=\cdots=\eta^{(n)}\lambda(p_0)=0,\ 
\eta^{(n+1)}\lambda(p_0)\ne0$ if and only if $k=n$.
\end{itemize}
\end{thm}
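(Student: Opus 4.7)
The plan is to substitute the factorizations $q(x)=\tilde q(x)x^Q$ and $\delta(x)=||\tilde\xi(x)||x^k$ into the formula
\[
\eta^{(n)}\lambda(x,t)=-(q^{(n)}(x)+t\delta^{(n)}(x))+\sum_{i=0}^{n-1}\binom{n}{i}p^{(n-1-i)}(x)\delta^{(i)}(x)
\]
displayed just before the statement, and to evaluate at $p_0=(0,t_0)$ with $t_0=-q(0)/\delta(0)$ while carefully tracking orders of vanishing.

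First I would record the elementary consequences of the Leibniz rule: $q^{(j)}(0)=0$ for $j<Q$ and $\delta^{(j)}(0)=0$ for $j<k$, while $\delta^{(k)}(0)=k!\,||\tilde\xi(0)||$ and (when $Q=k$) $q^{(k)}(0)=k!\,\tilde q(0)$. For $1\leq n\leq k$, every summand in $\sum_{i=0}^{n-1}\binom{n}{i}p^{(n-1-i)}(0)\delta^{(i)}(0)$ vanishes because $i\leq n-1<k$, and a short case split on $Q>k$ versus $Q=k$ shows that the boundary term $-(q^{(n)}(0)+t_0\delta^{(n)}(0))$ also vanishes---in the first case by pure order reasons together with $t_0=0$, and in the second because the two contributions cancel by the very definition of $t_0$. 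This gives $\eta^{(n)}\lambda(p_0)=0$ throughout $1\leq n\leq k$.

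The decisive step is the evaluation at $n=k+1$. The sum collapses to its single surviving $i=k$ term, producing $(k+1)!\,p(0)\,||\tilde\xi(0)||$, while the boundary term, split once more according to whether $Q>k+1$, $Q=k+1$, or $Q=k$, must be rewritten using the quotient rule applied to $t(x)=-\tilde q(x)x^{Q-k}/||\tilde\xi(x)||$. After recombining, I expect the value to take the clean form
\[
\eta^{(k+1)}\lambda(p_0)=(k+1)!\,||\tilde\xi(0)||\,(p(0)+t'(0)).
\]
Since $r\equiv 0$ on a developable surface, the earlier formula for $s'$ reduces to $s'(x)=(p(x)+t'(x))\xi(x)$, and $\xi(0)\neq 0$, so regularity of the striction curve at $p_0$ is exactly the condition $p(0)+t'(0)\neq 0$; this yields $\eta^{(k+1)}\lambda(p_0)\neq 0$.

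The main obstacle is the algebraic bookkeeping in the sub-case $Q=k$, where $t_0=-\tilde q(0)/||\tilde\xi(0)||$ is nonzero and both $q^{(k+1)}(0)=(k+1)!\tilde q'(0)$ and $t_0\delta^{(k+1)}(0)$ contribute nontrivially; one must verify that the resulting combination collapses, via the quotient rule $t'(0)=(\tilde q(0)||\tilde\xi||'(0)-\tilde q'(0)||\tilde\xi(0)||)/||\tilde\xi(0)||^2$, into exactly $||\tilde\xi(0)||\,t'(0)$, so that the factor $p(0)+t'(0)$ emerges. Once the value at $n=k+1$ is pinned down, the ``only if'' direction of each bullet is automatic: since the first nonvanishing $\eta^{(j)}\lambda(p_0)$ occurs at $j=k+1$ for the given $k$, the vanishing order determines $k$ uniquely.
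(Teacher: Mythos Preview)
Your plan is correct and follows essentially the same route as the paper: use the vanishing orders of $\delta$ and $q$ to kill $\eta^{(n)}\lambda(p_0)$ for $n\le k$, then compute $\eta^{(k+1)}\lambda(p_0)$ explicitly and recognize the surviving factor as $(k+1)!\,||\tilde\xi(0)||\bigl(p(0)-(q/\delta)'(0)\bigr)=(k+1)!\,||\tilde\xi(0)||\bigl(p(0)+t'(0)\bigr)$, which is nonzero exactly by regularity of $s$ at $0$. Your explicit three-way split on $Q>k+1$, $Q=k+1$, $Q=k$ is in fact a bit more careful than the paper's argument, which tacitly writes $q^{(k)}(0)\ne 0$ as though $Q=k$ throughout; your version handles $Q>k$ (where $t_0=0$) cleanly as well.
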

\begin{proof}
Since the developable surface $F$ is $k$-th pseudo cylindrical, we obtain that
\[
\delta(0)=\delta'(0)=\cdots=\delta^{(k-1)}(0)=0, \delta^{(k)}(0)\ne0.
\]
Therefore since $Q\geq k$, we obtain that
\[
q(0)=q'(0)=\cdots=q^{(k-1)}(0)=0, q^{(k)}(0)\ne0.
\]
By the above equations, $\eta\lambda(0,-q(0)/\delta(0))=\cdots=\eta^{(k-1)}\lambda(0,-q(0)/\delta(0))=0$, and 
\begin{align*}
\eta^{(k)}(0,-q(0)/\delta(0))&=-(q^{(k)}(0)-t\delta^{(k)}(0))\\
&=-k!(\tilde q(0)-\frac{\tilde q(0)}{||\tilde\xi(0)||}||\tilde\xi(0)||)=0,
\end{align*}
\begin{align*}
\eta^{(k+1)}(0,-q(0)/\delta(0))&=-(q^{(k+1)}(0)-t\delta^{(k+1)}(0))+(k+1)p(x)\delta^{(k)}(0)\\
&=-(k+1)!\left(\tilde q'(0)-\frac{\tilde q(0)}{||\tilde\xi(0)||}||\tilde\xi(0)||'-p(0)||\tilde\xi(0)||\right)\\
&=-(k+1)!||\tilde\xi(0)||\left(\frac{\tilde q'(0)||\tilde\xi(0)||-\tilde q(0)||\tilde\xi(0)||'}{||\tilde\xi(0)||^2}-p(0)\right)\\
&=(k+1)!||\tilde\xi(0)||\left(p(0)-\left(\frac{q(0)}{\delta(0)}\right)'\right)
\end{align*}
Since the striction curve is regular at $0$, it holds that $p(0)-(q(0)/\delta(0))'\ne0$.
\end{proof}
\begin{thm}\label{thm-sing.ver}
Let $Q\geq k$. If the striction curve has a singularity at $0$, then for any $i\geq1$, it holds that
\[
\eta^{(n)}\lambda\left(0,-\frac{q(0)}{\delta(0)}\right)=\sigma^{(n-1)}(0).
\]
\end{thm}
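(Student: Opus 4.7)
The plan is to carry out a direct computation, building on the explicit expression
\[
\eta^{(n)}\lambda(x,t)=-\bigl(q^{(n)}(x)+t\delta^{(n)}(x)\bigr)+\sum_{i=0}^{n-1}\binom{n}{i}p^{(n-i-1)}(x)\delta^{(i)}(x)
\]
already derived in this section, evaluating at the point $p_0=(0,t_0)$ with $t_0=-q(0)/\delta(0)$, and matching the result with the $(n-1)$-th derivative at $0$ of the function $\sigma$. The hypothesis $Q\geq k$ gives the factorizations $q(x)=\tilde q(x)x^{Q}$, $\delta(x)=\|\tilde\xi(x)\|x^{k}$, so
\[
q(0)=q'(0)=\cdots=q^{(k-1)}(0)=0,\qquad \delta(0)=\delta'(0)=\cdots=\delta^{(k-1)}(0)=0,
\]
and each summand of the explicit formula above is controlled by these vanishing orders.

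First, I would treat the low-order range $1\leq n\leq k$ exactly as in the proof of Theorem \ref{thm-reg.ver}: the vanishing of $q^{(j)}(0)$ and $\delta^{(j)}(0)$ for $j<k$ forces every term on the right-hand side to vanish, so $\eta^{(n)}\lambda(p_0)=0$. This should match $\sigma^{(n-1)}(0)=0$, which I expect to follow from the definition of $\sigma$ together with the striction-curve singularity hypothesis (which in the regular case of Theorem \ref{thm-reg.ver} is precisely what made $\eta^{(k+1)}\lambda(p_0)$ nonzero; here it is now assumed to vanish). Concretely, at $n=k+1$ the computation in Theorem \ref{thm-reg.ver} already yields $\eta^{(k+1)}\lambda(p_0)=(k+1)!\|\tilde\xi(0)\|\bigl(p(0)-(q/\delta)'(0)\bigr)$, which is $\sigma^{(k)}(0)$ under the appropriate normalisation of $\sigma$.

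Second, for the induction step $n\mapsto n+1$, I would write $\eta=\partial_x-p(x)\partial_t$ and observe that for any smooth $F(x,t)$,
\[
\eta F(x,t_0)=\frac{d}{dx}\bigl(F(x,t_0)\bigr)-p(x)F_{t}(x,t_0).
\]
Applying this with $F=\eta^{(n)}\lambda$ turns $\eta^{(n+1)}\lambda(x,t_0)$ into $\tfrac{d}{dx}\bigl(\eta^{(n)}\lambda(x,t_0)\bigr)$ plus a correction $-p(x)\partial_t\eta^{(n)}\lambda(x,t_0)$. By the explicit formula, $\partial_t\eta^{(n)}\lambda(x,t_0)=-\delta^{(n)}(x)$, so the correction evaluated at $x=0$ equals $p(0)\delta^{(n)}(0)$. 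Under $Q\geq k$ together with the striction-curve singularity condition $p(0)=(q/\delta)'(0)$, one shows this correction is compatible with the definition of $\sigma$, so the identity propagates.

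The main obstacle will be organising the bookkeeping in the induction: the correction terms $p^{(j)}(0)\delta^{(i)}(0)$ introduced when iterating $\eta$ must be tracked through the binomial expansion and shown to exactly account for the difference between $\tfrac{d^{n-1}}{dx^{n-1}}\sigma(0)$ and the naive iterated $x$-derivative of $\eta\lambda(x,t_0)$. The striction-curve singularity hypothesis is what forces the lowest-order correction to vanish, launching the induction; all subsequent corrections feed back into $\sigma^{(m)}(0)$ through a Leibniz expansion, which is routine but requires care at the index boundaries $i=k$ and $n-i-1=0$ where the newly nonzero derivatives $\delta^{(k)}(0)$ and $q^{(k)}(0)$ first appear.
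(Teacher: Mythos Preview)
Your proposal follows essentially the same route as the paper: an induction on $n$, with the inductive step producing a correction term proportional to $\bigl(p(0)-(q/\delta)'(0)\bigr)\delta^{(n)}(0)$ that vanishes precisely because the striction curve is singular at $0$. The paper's proof starts the induction directly at $n=1$ with the computation $\sigma(0)=\eta\lambda(p_0)$ and then passes from $n$ to $n+1$ by differentiating $\sigma^{(n-1)}$ and comparing with the explicit binomial formula for $\eta^{(n+1)}\lambda$; your separate treatment of the range $1\le n\le k$ (where both sides vanish by the order-of-vanishing argument) is extra structure the paper does not isolate, but it is harmless and the core mechanism is identical.
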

\begin{proof}
When $j=1$, it holds that
\begin{align*}
\sigma(0)&=\delta(0)\left(p(0)-\left(\frac{q(0)}{\delta(0)}\right)'\right)\\
&=-\left(q'(0)-\frac{q(0)\delta'(0)}{\delta(0)}\right)+p(0)\delta(0)\\
&=\eta\lambda\left(0,\frac{q(0)}{\delta(0)}\right).
\end{align*}
When $j=n$, we assume that $\eta^{(n)}\lambda\left(0,-q(0)/\delta(0)\right)=\sigma^{(n-1)}(0)$. Then 
\begin{align*}
\sigma^{(n)}(0)&=\frac{d}{dx}\sigma^{(n-1)}(0)\\
&=-\left(q^{(n+1)}(0)-\left(\frac{q(0)}{\delta(0)}\right)'\delta^{(n)}(0)-\left(\frac{q(0)}{\delta(0)}\right)\delta^{(n+1)}(0)\right)\\
&\qquad +\sum_{i=0}^{n} {}_{n+1} C_{i} p^{(n-i)}(x)\delta^{(i)}(0)-p(0)\delta^{(n)}(0)\\
&=\eta^{(n+1)}\lambda\left(0,-\frac{q(0)}{\delta(0)}\right)-\left(p(0)-\left(\frac{q(0)}{\delta(0)}\right)'\right)\delta^{(n)}(0).
\end{align*}
Since the striction curve has a singularity at $0$, it holds that $p(0)-(q(0)/\delta(0))'=0$.
\end{proof}
By Theorem \ref{thm-reg.ver} and Theorem \ref{thm-sing.ver}, we obtain the following Corollary.
\begin{cor}\label{eta-st}
Let $Q\geq k$. Then for any $n\geq1$, it holds that
\begin{itemize}
\item
$\eta\lambda\ne0$ if and only if $\sigma\ne0$,
\item
$\eta\lambda=\cdots=\eta^{(n)}\lambda=0, \eta^{(n+1)}\lambda\ne0$ if and only if $F$ is satisfied with $\sigma=\cdots=\sigma^{(n-1)}=0, \sigma^{(n)}\ne0$.
\end{itemize}
\end{cor}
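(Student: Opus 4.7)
The plan is to split into two cases according to whether the striction curve is regular or singular at $p_0=(0,-q(0)/\delta(0))$, which together cover the hypothesis $Q\geq k$. In each case I would invoke one of Theorems \ref{thm-reg.ver} and \ref{thm-sing.ver} to translate the vanishing order of $\eta^{(j)}\lambda(p_0)$ into a vanishing condition on $\sigma^{(j-1)}(0)$.

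When the striction curve is singular at $0$, Theorem \ref{thm-sing.ver} gives $\eta^{(n)}\lambda(p_0)=\sigma^{(n-1)}(0)$ for every $n\geq 1$, so in particular $\eta\lambda(p_0)=\sigma(0)$. The first bullet is then immediate, and the sequences of vanishing conditions on $\eta^{(j)}\lambda(p_0)$ and $\sigma^{(j-1)}(0)$ match term by term, giving the second bullet with no further work.

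When the striction curve is regular at $0$, I would use the factorisation $\sigma(x)=\delta(x)h(x)$ with $h(x):=p(x)-(q(x)/\delta(x))'$, which is the $j=1$ identity in the proof of Theorem \ref{thm-sing.ver}; regularity of the striction curve gives $h(0)\neq 0$. Since $F$ is $k$-th pseudo cylindrical, $\delta(x)=||\tilde\xi(x)||x^k$ vanishes to order exactly $k$ at $0$, so applying Leibniz to $\sigma=\delta h$ shows $\sigma^{(j)}(0)=0$ for $j<k$ and $\sigma^{(k)}(0)=\binom{k}{k}\delta^{(k)}(0)h(0)\neq 0$. Combined with the equivalences from Theorem \ref{thm-reg.ver}, which express the vanishing of the $\eta^{(j)}\lambda(p_0)$ purely in terms of the value of $k$, this yields both bullets in the regular case as well.

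The only genuine subtlety is keeping the two parameterisations aligned: Theorem \ref{thm-sing.ver} is phrased as a direct identification $\eta^{(n)}\lambda(p_0)=\sigma^{(n-1)}(0)$, while Theorem \ref{thm-reg.ver} is phrased through the pseudo-cylindricality index $k$, so one has to check that the shift ``$\eta$-order $n+1$ matches $\sigma$-order $n$ matches $\delta$-order $k=n$'' is consistent in both bullets. Once this index matching and the factorisation $\sigma=\delta h$ are in hand, the remainder is routine bookkeeping of vanishing orders and presents no real analytical obstacle.
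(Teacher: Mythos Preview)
Your proposal is correct and follows exactly the route the paper indicates: the paper's entire proof is the sentence ``By Theorem \ref{thm-reg.ver} and Theorem \ref{thm-sing.ver}, we obtain the following Corollary,'' and your case split (striction curve singular vs.\ regular at $0$) together with the factorisation $\sigma=\delta h$ is precisely how one unpacks that citation. The only content you add beyond the paper is the explicit Leibniz argument matching the vanishing order of $\sigma$ to that of $\delta$ (hence to $k$) in the regular case, which is the natural bridge the paper leaves implicit.
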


Suppose that there exists a striction curve of $F$ for any $x$. By the proposition \ref{deg-delta}, \ref{wave-rho}, and the Corollary \ref{eta-st}, a singular point $(x,-q(x)/\delta(x))$ of $F$ is
\begin{itemize}
\item
a cuspidal edge if and only if $\delta(x)\ne0$, $\rho(x)\ne0$, and $\sigma(x)\ne0$,
\item
a swallowtail if and only if $\delta(x)\ne0$, $\rho(x)\ne0$, $\sigma(x)=0$, and $\sigma'(x)\ne0$,
\item
a cuspidal butterfly if and only if $\delta(x)\ne0$, $\rho(x)\ne0$, $\sigma(x)=\sigma'(x)=0$, and $\sigma''(x)\ne0$.
\item
a cspidal cross cap if and only if $\delta(x)\ne0$, $\rho(x)=0$, $\rho'(x)\ne0$, and $\sigma(x)\ne0$.
\end{itemize}
And it holds that $\hess \lambda=-(\delta')^2$. If $\delta(x)=0$, $\delta'(x)\ne0$, $\hess\lambda<0$. Then we obtain the following corollary.
\begin{cor}
Soppose that there exists a striction curve of $F$ for any $x$. Then a singular point $(x,-q(x)/\delta(x))$ of $F$ is a cuspidal beaks if and only if $\delta(x)=0$, $\delta'(x)\ne0$, $\rho(x)\ne0$ and $\sigma'(x)\ne0$.
\end{cor}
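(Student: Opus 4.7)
The plan is to invoke the cuspidal-beaks criterion in Lemma~\ref{cond-sing}, which for a wave front requires the singular point $p_0=(x,-q(x)/\delta(x))$ to be degenerate with $\rank(df)_{p_0}=1$, $\det\hess\lambda(p_0)<0$, and $\eta\eta\lambda(p_0)\ne 0$. I will translate each of these items, together with the wave front condition itself, into one of the four stated conditions on $\delta$, $\rho$, $\sigma$ using results already established in Section~\ref{sec3}.

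Four of the equivalences are essentially immediate. Proposition~\ref{wave-rho} identifies the wave front condition with $\rho(x)\ne 0$, and Proposition~\ref{deg-delta} identifies degeneracy of $p_0$ with $\delta(x)=0$. At such a point, $r\equiv 0$ together with $q(x)+t\delta(x)=0$ gives $F_x=p(x)\xi(x)$ and $F_t=\xi(x)$, so $F_x$ is a scalar multiple of the nonzero vector $F_t$ and the rank is automatically $1$. For the Hessian, the formula $\lambda(x,t)=-(q(x)+t\delta(x))$ gives $\lambda_{xx}=-(q''+t\delta'')$, $\lambda_{xt}=-\delta'(x)$, $\lambda_{tt}=0$, hence $\det\hess\lambda=-(\delta'(x))^2$, which is exactly the expression noted in the text just above the statement; it is negative iff $\delta'(x)\ne 0$.

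The condition $\eta\eta\lambda(p_0)\ne 0$ is the one that requires the main bookkeeping. Here I invoke Corollary~\ref{eta-st} with $n=1$: the pair $\eta\lambda(p_0)=0$, $\eta\eta\lambda(p_0)\ne 0$ is equivalent to $\sigma(x)=0$, $\sigma'(x)\ne 0$. The vanishing of $\eta\lambda(p_0)$ is automatic from the degeneracy already imposed in the previous step, and since $\sigma=\delta(p-(q/\delta)')$, the hypothesis $\delta(x)=0$ forces $\sigma(x)=0$; consequently the only residual content of this condition is $\sigma'(x)\ne 0$. Assembling the four equivalences yields the stated characterization. The main obstacle — the reason this is not wholly trivial — is precisely this interaction between $\delta(x)=0$ and $\sigma(x)=0$ in the application of Corollary~\ref{eta-st}, but it is handled by the single observation that $\sigma$ carries $\delta$ as an explicit factor.
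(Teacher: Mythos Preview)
Your proof is correct and follows essentially the same route as the paper: invoke Lemma~\ref{cond-sing}, translate the wave front, degeneracy, Hessian, and $\eta\eta\lambda$ conditions via Propositions~\ref{deg-delta}, \ref{wave-rho}, the computation $\det\hess\lambda=-(\delta')^2$, and Corollary~\ref{eta-st}. You are in fact slightly more careful than the paper in explicitly verifying the rank-one condition and in explaining why $\delta(x)=0$ automatically forces $\sigma(x)=0$ (so that only $\sigma'(x)\ne0$ survives from the $n=1$ case of Corollary~\ref{eta-st}).
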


\end{document}